\def\Nset{{\mathbb{N}}}
\def\Rset{\mathbb R}
\def\C0{\mathsf{C}_0}
\newcommand{\as}{\text{a.s.} }
\newcommand{\iid}{i.i.d.}
\newcommand{\eqdef}{\ensuremath{\stackrel{\mathrm{def}}{=}}}
\def\eqsp{\;}
\newcounter{rmnum}
\newcommand{\rate}[2][]
{\ifthenelse{\equal{#1}{}}{\ratesymbol(#2)}{\ratesymbol^{(#1)}(#2)}}
\def\Xset{\mathsf{X}} 
\def\Xsigma{\mathcal{X}} 
\def\Yset{\mathsf{Y}} 
\def\Ysigma{\mathcal{Y}} 
\def\L{\mathcal{L}}
\def\PE{\ensuremath{\mathbb E}}
\def\PP{\ensuremath{\mathbb P}}
\newcommand{\tvnorm}[1]{\ensuremath{\left\|#1\right\|_{\mathrm{TV}}}}
\newcommand{\fnorm}[2]{\ensuremath{\left|#1\right|_{#2}}}
\newcommand{\supnorm}[1]{| #1 |_{\infty}}
\newcommand{\aslim}{\ensuremath{\stackrel{\text{a.s.}}{\longrightarrow}}}
\newtheorem{theo}{Theorem}[section]
\newtheorem{lemma}[theo]{Lemma}
\newtheorem{coro}[theo]{Corollary}
\newtheorem{prop}[theo]{Proposition}
\theoremstyle{remark}
\newtheorem{rem}{Remark}
\newcounter{hypA}
\newenvironment{hypA}{\refstepcounter{hypA}\begin{itemize}
  \item[{\bf A\arabic{hypA}}]}{\end{itemize}}
\newcounter{hypEE}
\newcounter{hypAM}
\newcounter{hypU}
\def\rmd{\mathrm{d}}
\def\1{\mathbbm{1}}
\newcommand{\CPE}[3][]
{\ifthenelse{\equal{#1}{}}{\mathbb{E}\left[\left. #2 \, \right| #3 \right]}{\mathbb{E}_{#1}\left[\left. #2 \, \right | #3 \right]}}
\newcommand{\CPP}[3][]
{\ifthenelse{\equal{#1}{}}{\mathbb{P}\left[\left. #2 \, \right| #3 \right]}{\mathbb{P}_{#1}\left(\left. #2 \, \right | #3 \right)}}
\newcommand{\wrt}{with respect to}
\newcommand{\coint}[1]{\left[#1\right)}
\newcommand{\ooint}[1]{\left(#1\right)}
\newcounter{hypoconbis}
\newcounter{saveconbis}
\newcommand\debutA{\begin{list} {\textbf{A\arabic{hypoconbis}}}{\usecounter{hypo
conbis}}\setcounter{hypoconbis}{\value{saveconbis}}}
\newcommand\finA{\end{list}\setcounter{saveconbis}{\value{hypoconbis}}}
\newcommand{\chunk}[4][]%
{\ifthenelse{\equal{#1}{}}{\ensuremath{{#2}_{#3:#4}}}{\ensuremath{#2}_{#1,#3:#4}}
}
\newcommand{\Var}[2][]{\ifthenelse{\equal{#1}{}}{\mathrm{Var} \left[ #2 \right]}{\mathrm{Var}_{#1} \left[ #2 \right]}}
\newcommand{\lpnorm}[3]{\ensuremath{\left\| #1 \right\|_{#2,#3}}} 
\newcommand{\sequencen}[1]{\left( #1 \right)_{n \in \Nset}}
\begin{document}

\begin{frontmatter}
\title{A simple variance inequality for U-statistics of a Markov chain with applications \tnoteref{t1}}
\journal{Statistics and Probability Letters}
\author[tpt]{G.~Fort \corref{cor1}}
\ead{gersende.fort@telecom-paristech.fr}
\author[tpt]{E.~Moulines}
\ead{eric.moulines@telecom-paristech.fr}
\author[pmc]{P.~Priouret}
\ead{p.priouret@upmc.jussieu.fr}
\author[mlv]{P.~Vandekerkhove}
\ead{Pierre.Vandekerkhove@univ-mlv.fr}

\cortext[cor1]{Corresponding Author}
\address[tpt]{LTCI-CNRS/TELECOM ParisTech, 46 rue Barrault, 75634 Paris Cedex 13, France. Fax: +(33) 1 45 81 71 44}
\address[pmc]{LPMA, Universit\'e Pierre et Marie Curie, Bo\^ite Courrier 188, 75252 Paris Cedex 5, France}
\address[mlv]{LAMA, Universit\'e de Marne-la-Vall\'ee 5, boulevard Descartes, 77454 Marne-la-Vall\'ee cedex 2, France}

\begin{abstract}
  We establish a simple variance inequality for U-statistics whose underlying
  sequence of random variables is an ergodic Markov Chain. The constants in
  this inequality are explicit and depend on computable bounds on the mixing
  rate of the Markov Chain. We apply this result to derive the strong law of
  large number for U-statistics of a Markov Chain under conditions which are
  close from being optimal.
\end{abstract}

\tnotetext[t1]{This work is partially supported by the French National Research Agency, under
    the program ANR-08-BLAN-0218 BigMC}

\begin{keyword}
 U-statistics \sep Markov chains \sep  Inequalities \sep Limit theorems \sep Law of large numbers 
\end{keyword}
\end{frontmatter}

\section{Introduction}

Let $\{Y_{n}\}_{n=0}^{\infty}$ be a sequence of random variables with values in
a measurable space $(\Yset,\Ysigma)$.  Let $m$ be an integer and $h: \Yset^{m}
\to \Rset$ be a symmetric function. For $n \geq m$, the U-statistic associated
to $h$ is defined by
\begin{equation}
\label{eq:definition-U-stat}
U_{n,m}(h) \eqdef \binom{n}{m}^{-1} \sum_{1\leq i_{1} < \cdots < i_{m}\leq n}h(Y_{i_{1}},\ \dots,\ Y_{i_{m}}) \,.
\end{equation}
The function $h$ is often referred to as the kernel of the U-statistics and
$m$ is called the degree of $h$.  We refer to \cite{serfling:1980},
\cite{lee:1990}, and \cite{koroljuk:borovskich:1994} for U-statistics whose
underlying sequence is an \iid\ sequence of random variables.

Several authors have studied U-statistics for \emph{stationary} sequences of
dependent random variables under different dependence conditions: see
\cite{arcones:1998}, \cite{borovkova:burton:dehling:2001}, \cite{dehling:2006}
and the references therein. Much less efforts have been spent on the behavior
of U-statistics for non-stationary and asymptotically stationary processes; see
\cite{harel:puri:1990} and \cite{harel:elharfaoui:2008}. In this letter, we
establish a variance inequality for U-statistics whose underlying sequence is
an ergodic Markov Chain (which is not assumed to be stationary). This
inequality is valid for U-statistics of any order and the constants appearing
in the bound can be explicitly computed (for example, using Foster-Lyapunov
drift and minorization conditions if the chain is geometrically ergodic). This
inequality can be used to derive, with minimal effort, limit theorems for
U-statistics of a non-stationary Markov chain. In this paper, for the purpose
of illustration, we derive the strong law of large numbers (SLLN) under weak
conditions.

\section*{Notations}
Let $(\Yset, \Ysigma)$ be a general state space (see e.g. \cite[Chapter
3]{meyn:tweedie:2009}) and $P$ be a Markov transition kernel. $P$ acts on
bounded measurable functions $f$ on $\Yset$ and on measures $\mu$
on $\Ysigma$ via
\[
P f(x) \eqdef \int P(x,\rmd y) f(y) \eqsp, \qquad \mu P (A) \eqdef \int
\mu(\rmd x) P(x,A) \eqsp.
\]
We will denote by $P^n$ the $n$-iterated transition kernel defined by induction
\[
P^n(x,A) \eqdef \int P^{n-1}(x,\rmd y) P(y,A) = \int P(x,\rmd y) P^{n-1}(y,A)
\eqsp;
\]
where $P^0$ coincides with  the identity kernel.  For a function $V : \Yset \to \coint{1,+\infty}$, define the $V$-norm of a function $f: \Yset \to \Rset$ by
\[
\fnorm{f}{V} \eqdef \sup_{\Yset}|f|/V \eqsp.
\]
When $V=1$, the $V$-norm is the supremum norm and will be denoted by
$\supnorm{f}$.  Let $\L_V$ be the set of measurable functions such that $\fnorm{f}{V} <
+\infty $. For two probability measures $\mu_1,\mu_2$ on $(\Yset,\Ysigma)$,  $\tvnorm{\mu_1 -\mu_2}$ denotes
the total variation distance.

For $\mu$ a probability distribution on $(\Yset,\Ysigma)$ and $P$ a Markov
transition kernel on $(\Yset,\Ysigma)$, denote by $\PP_\mu$ the distribution of
the Markov chain $\sequencen{Y_n}$ with initial distribution $\mu$ and
transition kernel $P$; let $\PE_{\mu}$ be the associated expectation. For $p>0$
and $Z$ a random variable measurable \wrt\ the $\sigma$-algebra $\sigma\left(
  \sequencen{Y_n} \right)$, set $\lpnorm{Z}{\mu}{p}\eqdef \left(\PE_{\mu}\left[
    |Z|^p \right]\right)^{1/p}$.

\section{Main Results}
Let $P$ be a Markov transition kernel on $(\Yset,\Ysigma)$.  We assume that the
transition kernel $P$ satisfies the following assumption:
\begin{hypA}
\label{hyp:assumption-kernel}
The kernel $P$ is positive Harris recurrent and has a unique stationary
distribution $\pi$. In addition, there exist a measurable function $V: \Yset
\to \coint{1,+\infty}$ and a nonnegative non-increasing sequence $\left( \rho(k)
\right)_{k \in \Nset}$ such that $\lim_n \rho(n) = 0$ and for any probability
distributions $\mu$ and $\mu'$ on $(\Yset,\Ysigma)$, and any integer $k$,
\begin{equation}
\label{eq:contraction-V-norm}
\tvnorm{\mu P^k - \mu' P^k} \leq \rho(k) \left[ \mu(V) + \mu'(V) \right] \eqsp,
\end{equation}
and
\begin{equation}
\label{eq:finite-moment-pi}
\pi(V) < \infty \eqsp.
\end{equation}
\end{hypA}

\begin{hypA}
\label{hyp:bound-V-norm-canonical-U-stat}
The function $h$ is symmetric and $\pi$-canonical, {\it i.e,} for all
$(y_1,\dots,y_{m-1})\in \Yset^{m-1}$, $y \mapsto h(y_1,\dots,y_{m-1},y)$ is $\pi$-integrable and
\begin{equation}
\label{eq:degenere}
\int \pi(\rmd y) h(y_1,\dots,y_{m-1}, y)=0 \, .
\end{equation}
\end{hypA}

For $\mu$ a probability measure on $(\Yset,\Ysigma)$, we denote
\begin{equation}
\label{eq:definition-M}
M(\mu,V) \eqdef \sup_{k \geq 0} \mu P^k (V) \eqsp.
\end{equation}
Note that, under \textbf{A~\ref{hyp:assumption-kernel}}, for any probability
measure $\mu$ on $(\Xset,\Xsigma)$, $\pi(V) \leq M(\mu,V)$.  We can now state
the main result of this paper, which is an explicit bound for the variance of
bounded $\pi$-canonical U-statistics. The proof of
Theorem~\ref{theo:variance-U-stat} is given in
Section~\ref{sec:proof:variance:Ustat}.
\begin{theo}
\label{theo:variance-U-stat}
Assume
\textbf{A\ref{hyp:assumption-kernel}}-\textbf{A\ref{hyp:bound-V-norm-canonical-U-stat}}.
If $|h|_\infty<\infty $ then, for any initial probability measure $\mu$ on
$(\Yset,\Ysigma)$,
\begin{equation}
\label{eq:bounded-function}
\lpnorm{U_{n,m}(h)}{\mu}{2} \leq  C_{n,m} \sqrt{M(\mu,V)} \supnorm{h} \, n^{-m/2}
\end{equation}
with
\begin{equation}
\label{eq:definition-constant-C_m}
C_{n,m} \eqdef  \; 2^{m/2+1} \sqrt{(2m)!}\;        \left(\sum_{k=0}^{n} (k+1)^m \rho(k) \right)^{1/2}\, \frac{n^m }{\binom{n}{m}} \eqsp.
\end{equation}
\end{theo}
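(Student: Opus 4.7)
The plan is to expand
\[
\PE_\mu[U_{n,m}(h)^2] = \binom{n}{m}^{-2}\sum_{I,J}\PE_\mu[h(Y_I)h(Y_J)],
\]
where $Y_I=(Y_{i_1},\dots,Y_{i_m})$ and the sum runs over ordered pairs of $m$-subsets $I,J\subseteq\{1,\dots,n\}$, to bound each covariance term by combining the $\pi$-canonicity \eqref{eq:degenere} of $h$ with the mixing inequality \eqref{eq:contraction-V-norm}, and then to sum the resulting bounds with a combinatorial count tailored to produce the weighted mixing sum $\sum_k(k+1)^m\rho(k)$ appearing in $C_{n,m}$.

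The core covariance estimate asserts that, for any pair $(I,J)$ with $I\ne J$, there exist a distinguished index $\ell^{\star}\in I\triangle J$ and a gap $\Delta(I,J)\geq 1$ such that
\[
|\PE_\mu[h(Y_I)h(Y_J)]|\leq 2\supnorm{h}^2 M(\mu,V)\rho(\Delta(I,J)).
\]
To prove this I use the symmetry of $h$ to place $Y_{\ell^{\star}}$ as the last argument of the $h$-kernel containing it (WLOG $\ell^{\star}\in I$); condition on the $\sigma$-algebra generated by $Y_0,\dots,Y_{\ell^{\star}-\Delta}$, where $\ell^{\star}-\Delta$ is the largest element of $I\cup J$ strictly smaller than $\ell^{\star}$; and invoke the Markov property so that the inner conditional expectation of the kernel containing $Y_{\ell^{\star}}$ becomes an integral of $h(\cdot,y)$ against $P^{\Delta}(Y_{\ell^{\star}-\Delta},\rmd y)$. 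Subtracting the vanishing integral against $\pi$ (by canonicity) and bounding the difference via \eqref{eq:contraction-V-norm} with $\mu=\delta_{Y_{\ell^{\star}-\Delta}}$ and $\mu'=\pi$, followed by $\PE_\mu[V(Y_{\ell^{\star}-\Delta})]+\pi(V)\leq 2M(\mu,V)$, concludes the individual estimate. The diagonal case $I=J$ uses the trivial bound $\supnorm{h}^2$ and contributes $\binom{n}{m}^{-1}\supnorm{h}^2$, which is absorbed in \eqref{eq:bounded-function}.

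The final step sums the covariance estimates by grouping pairs $(I,J)$ by the value $\Delta(I,J)=k$. A counting argument, accounting for at most $(2m)!$ relative orderings of the $\leq 2m$ indices of $I\cup J$, for the placement of the ``free'' indices away from the distinguished index (order $n^m$), and for the positions of indices lying within a window of width $k$ around the distinguished index (order $(k+1)^m$), yields $\#\{(I,J):\Delta(I,J)=k\}\lesssim(2m)!\,n^m(k+1)^m$. Multiplying by $\rho(k)$ and summing over $k$ produces the sum in \eqref{eq:definition-constant-C_m}; after dividing by $\binom{n}{m}^2$, collecting the $2M(\mu,V)$ factor from each individual estimate, and taking a square root, one recovers the constant $C_{n,m}$ in the stated form.

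The main obstacle is verifying the covariance estimate uniformly in $(I,J)$, and in particular when $\ell^{\star}=\max(I\triangle J)$ is strictly less than $\max(I\cup J)$: the indices of $I\cap J$ above $\ell^{\star}$ then lie in the ``future'' of $\ell^{\star}$ and appear as arguments of both $h$-kernels, so single-variable canonicity at $Y_{\ell^{\star}}$ does not yield an immediate cancellation. Handling this case requires either iterating the decoupling argument (applying \eqref{eq:contraction-V-norm} a second time to decouple $Y_{\ell^{\star}}$ from the higher common indices) or carrying these indices inside the conditional expectation and exploiting the symmetry of $h$ in its $m$ arguments; this extra book-keeping is what ultimately produces the $(2m)!$ factor in $C_{n,m}$.
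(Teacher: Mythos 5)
There is a genuine gap, and it is structural rather than a matter of missing book-keeping. Your covariance estimate is governed by a \emph{single} gap $\Delta(I,J)$ (the distance from one distinguished index $\ell^\star\in I\triangle J$ to its predecessor in $I\cup J$), and the combinatorial count you then assert, $\#\{(I,J):\Delta(I,J)=k\}\lesssim (2m)!\,n^m(k+1)^m$, is false for that definition: already for $m=2$, taking $I,J$ disjoint with merged indices $c_1<c_2<c_3<c_4$ and $c_4-c_3=k$ gives of order $n^3$ pairs for each fixed $k$, which exceeds $24\,n^2(k+1)^2$. A single proximity constraint can only cut the $n^{2m}$ pairs down to order $n^{2m-1}(k+1)$ per value of $k$, so your scheme yields at best $\sum_{I,J}|\PE_\mu[h(Y_I)h(Y_J)]|\lesssim n^{2m-1}\sum_k\rho(k)$, i.e.\ $\lpnorm{U_{n,m}(h)}{\mu}{2}=O(n^{-1/2})$, not the $O(n^{-m/2})$ claimed in \eqref{eq:bounded-function}. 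To reach $n^m(k+1)^m$ the decoupling quantity must be chosen so that its level set $\{=k\}$ imposes $m$ \emph{simultaneous} window constraints; the paper does this by merging $I\cup J$ into an ordered $2m$-uplet $i_1\le\dots\le i_{2m}$, setting $j_\ell(\mathcal I)=\min(i_{2\ell-1}-i_{2\ell-2},\,i_{2\ell}-i_{2\ell-1})$ and $j_\star(\mathcal I)=\max_\ell j_\ell(\mathcal I)$ as in \eqref{eq:definition-j}--\eqref{eq:definition-j-star}: the covariance is bounded by $4M(\mu,V)\rho(j_\star(\mathcal I))\supnorm{h}^2$, and $j_\star=k$ forces \emph{every} odd-position index to lie within $k$ of one of its neighbours, whence the cardinality bound $2^mn^m(k+1)^m$. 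Your description of the count (``$m$ free indices, $m$ indices in a window'') is the right picture, but it does not correspond to the quantity $\Delta(I,J)$ your covariance bound actually controls.

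The second gap is the covariance estimate itself: the case you flag as the ``main obstacle'' (indices of $I\cup J$ lying above $\ell^\star$) is precisely where the work is, and neither of the fixes you sketch is carried out. When such indices exist, the conditional expectation of the $h$-factor containing $Y_{\ell^\star}$ given the past up to $\ell^\star-\Delta$ is no longer an integral of $h(\vec z,\cdot)$ against $P^{\Delta}(Y_{\ell^\star-\Delta},\rmd y)$ with $\vec z$ measurable with respect to the conditioning $\sigma$-algebra, so the cancellation against $\pi$ from \eqref{eq:degenere} is lost; and the future indices common to $I$ and $J$ prevent you from bounding the other factor by $\supnorm{h}$ and pulling it out. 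The paper's Proposition~\ref{prop:bound-total-variation} resolves this by working with joint laws rather than one-sided conditioning: it compares the law $\PP_\mu^{\mathcal I}$ of $(Y_{i_1},\dots,Y_{i_{2m}})$ with a modified law $\tilde{\PP}_\mu^{\mathcal I}$ in which the selected variable $y_{2\ell-1}$ is made independent of \emph{both} past and future and $\pi$-distributed (the chain being restarted from $\mu$ at time $i_{2\ell}$); this costs $\rho$ of the two adjacent gaps --- hence the $\min$ in $j_\ell$ --- and, since $y_{2\ell-1}$ sits in exactly one of the two $h$-factors of $f_\sigma$, canonicity gives $\tilde{\PP}_\mu^{\mathcal I}(f_\sigma)=0$ for every permutation $\sigma$, no matter where the common indices fall. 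Decoupling from the past alone cannot substitute for this.
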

\begin{rem}
  In the case where $\rho(k)= \varrho^k$ for some $\varrho \in \ooint{0,1}$,
  for all $(m,n) \in \Nset$,
$$
\sum_{k=0}^{n} (k+1)^m \rho(k) \leq \frac{1}{\varrho \, (-\ln(\varrho))^{m+1} }
\frac{m ^{m+1}-(-\ln(\varrho))^{m+1}}{m +\ln (\varrho)} \eqsp.
$$
\end{rem} 
We may extend Theorem~\ref{theo:variance-U-stat} to symmetric functions $h$
which are not canonical.  For any integer $p$ and any $\mu_1, \dots, \mu_p$,
$p$ (signed) finite measures on $(\Yset,\Ysigma)$, denote by $\mu_1 \otimes
\dots \otimes \mu_p \eqdef \bigotimes_{i=1}^p \mu_i$, the product measure on
$(\Yset^p, \Ysigma^{\otimes p})$.  For $\mu$ a (signed) finite measure on
$(\Yset,\Ysigma)$, define $\mu^{\otimes p} \eqdef \mu \otimes \dots \otimes
\mu$.

Let $h:\Yset^m \to \Rset$ be a measurable and symmetric function such that
$\pi^{\otimes m} (|h|) < \infty$.  Define for any $c \in \{1,\dots,m-1\}$ the
measurable function $\pi_{c,m} h: \Yset^c \to \Rset$ given by
 \begin{equation}
 \label{eq:definition-pi-k-m}
 \pi_{c,m}h(y_{1}, \dots, y_{c}) \eqdef (\delta_{y_{1}}-\pi)\otimes \dots \otimes (\delta_{y_{c}}-\pi) \otimes \pi^{\otimes (m-c)}[h] \eqsp,
 \end{equation}
 where for any $y \in \Yset$, $\delta_y$ denotes the Dirac mass at $y$. Set
 \begin{equation}
 \label{eq:definition-pi-m-m}
\pi_{0,m} h \eqdef \pi^{\otimes m} h \quad \text{and} \quad  \pi_{m,m}h(y_{1}, \dots, y_{m}) \eqdef \bigotimes_{i=1}^m (\delta_{y_{i}}-\pi)[h]
 \eqsp.
 \end{equation}
 Note that for any $c \in \{1,\dots,m\}$ $\pi_{c,m}h$ is a $\pi$-canonical
 function.  The Hoeffding decomposition allows to write any U-statistics associated to
a symmetric function $h$ as the following sum of canonical U-statistics (see
 e.g.  \citep[p.  178, Lemma A]{serfling:1980}):
\begin{equation}
\label{eq:hoeffding-decomposition}
U_{n,m}(h) =   \sum_{c=0}^{m} \binom{m}{c}  U_{n,c}(\pi_{c,m}h) \eqsp,
\end{equation}
where $U_{n,c}$ is defined in \eqref{eq:definition-U-stat} when $c \geq 1$ and
$U_{n,0}(f)\eqdef f$. The symmetric function $h$ is said to be $d$-degenerated
(for $d \in \{0, \dots, m\}$) if $\pi_{d,m} h \not \equiv 0$ and $\pi_{c,m} h
\equiv 0$ for $c \in \{0, \dots, d-1 \}$. By construction, a $\pi$-canonical
function $h$ is $m$-degenerated (it is also said ``completely degenerated'').
\begin{coro}
\label{coro:variance-U-stat-non-canonique} Assume \textbf{A\ref{hyp:assumption-kernel}}.
Let $h$ be a bounded symmetric $d(h)$-degenerated function. Then
\[
\lpnorm{U_{n,m}(h) - \pi^{\otimes m}h}{\mu}{2} \leq \sqrt{M(\mu,V)} \supnorm{h}
\sum_{c=d(h)\vee 1}^m \binom{m}{c} 2^c \, C_{n,c} n^{-c/2} \eqsp,
\]
where $C_{n,c}$ is defined in \eqref{eq:definition-constant-C_m}.
\end{coro}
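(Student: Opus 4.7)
The plan is to apply Theorem~\ref{theo:variance-U-stat} term-by-term to the Hoeffding decomposition~\eqref{eq:hoeffding-decomposition}. Since $U_{n,0}(\pi_{0,m}h) = \pi^{\otimes m}h$ and since $\pi_{c,m}h \equiv 0$ for $c \in \{0,\dots,d(h)-1\}$ by the definition of $d(h)$, the decomposition reduces to
\[
U_{n,m}(h) - \pi^{\otimes m} h = \sum_{c=d(h)\vee 1}^{m} \binom{m}{c}\, U_{n,c}(\pi_{c,m}h) \eqsp.
\]
By construction (as noted right after \eqref{eq:definition-pi-m-m}), each function $\pi_{c,m}h$ with $c \in \{1,\dots,m\}$ is symmetric and $\pi$-canonical, so \textbf{A\ref{hyp:bound-V-norm-canonical-U-stat}} is satisfied for the kernel $\pi_{c,m}h$ of the U-statistic $U_{n,c}(\pi_{c,m}h)$.

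Next I would apply Minkowski's inequality associated with $\lpnorm{\cdot}{\mu}{2}$, which gives
\[
\lpnorm{U_{n,m}(h) - \pi^{\otimes m} h}{\mu}{2} \leq \sum_{c=d(h)\vee 1}^{m} \binom{m}{c}\, \lpnorm{U_{n,c}(\pi_{c,m}h)}{\mu}{2} \eqsp.
\]
It remains to control each summand through Theorem~\ref{theo:variance-U-stat}, which requires $\supnorm{\pi_{c,m}h}$ to be finite and explicit. Expanding the product of signed measures in \eqref{eq:definition-pi-k-m}, one writes $(\delta_{y_1}-\pi)\otimes\cdots\otimes(\delta_{y_c}-\pi)\otimes \pi^{\otimes(m-c)}[h]$ as a sum of $2^c$ terms, each of which is an integral of $h$ against a product of probability measures. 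Since $|h| \leq \supnorm{h}$, this yields the uniform bound $\supnorm{\pi_{c,m}h} \leq 2^c \supnorm{h}$.

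Combining these ingredients, Theorem~\ref{theo:variance-U-stat} applied to $U_{n,c}(\pi_{c,m}h)$ gives, for every $c \in \{d(h)\vee 1,\dots,m\}$,
\[
\lpnorm{U_{n,c}(\pi_{c,m}h)}{\mu}{2} \leq C_{n,c}\, \sqrt{M(\mu,V)}\, \supnorm{\pi_{c,m}h}\, n^{-c/2} \leq 2^{c}\, C_{n,c}\, \sqrt{M(\mu,V)}\, \supnorm{h}\, n^{-c/2} \eqsp.
\]
Plugging this into the Minkowski bound yields the stated inequality. There is no genuine obstacle in the argument: the only slightly delicate piece is the routine bookkeeping that produces the factor $2^c$ in the supremum norm of $\pi_{c,m}h$, and the verification that the $c=0$ term in the Hoeffding decomposition is precisely the centering constant $\pi^{\otimes m}h$ that is being subtracted.
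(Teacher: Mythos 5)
Your proof is correct and follows exactly the route the paper intends (the paper leaves this argument implicit after introducing the Hoeffding decomposition~\eqref{eq:hoeffding-decomposition}): decompose $U_{n,m}(h)-\pi^{\otimes m}h$ into the canonical pieces $U_{n,c}(\pi_{c,m}h)$ for $c\ge d(h)\vee 1$, apply the triangle inequality in $\lpnorm{\cdot}{\mu}{2}$, and invoke Theorem~\ref{theo:variance-U-stat} for each term using $\supnorm{\pi_{c,m}h}\le 2^c\supnorm{h}$. The bookkeeping of the $2^c$ factor and the identification of the $c=0$ term with the centering constant are both handled correctly.
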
 It is possible to extend the previous result to unbounded canonical
functions.  Define, for any $q \geq 1$,
\begin{equation}
\label{eq:bound-h}
 B_q(h) \eqdef \sup_{(y_1,\dots,y_m) \in \Yset^m} \frac{|h(y_1, \cdots, y_m)|}{\sum_{j=1}^m V^{1/q}(y_j)}  \eqsp,
\end{equation}
where $V$ is defined in \textbf{A\ref{hyp:assumption-kernel}}.  The proof of
Corollary~\ref{coro:variance-U-stat} is given in
Section~\ref{sec:proof:variance:Ustat}.
\begin{coro}
\label{coro:variance-U-stat}
Assume \textbf{A\ref{hyp:assumption-kernel}}-\textbf{A\ref{hyp:bound-V-norm-canonical-U-stat}}
and that, for some $p \in \coint{0,\infty}$, $B_{2(p+1)}(h) < \infty$ holds.
Then, for any initial probability measure $\mu$ on $(\Yset,\Ysigma)$,
\begin{equation*}
\lpnorm{U_{n,m}(h)}{\mu}{2} \leq 2^{m/2} m \sqrt{(2m)!}  \; D(p,\mu,V,h) \;
\left( \sum_{k=0}^n (k+1)^m \left(\rho(k)\right)^{\frac{p}{(p+1)}}
\right)^{1/2} \frac{n^{m/2}}{\binom{n}{m}} \eqsp,
\end{equation*}
where the constant $D(p,\mu,V,h)$ is given by
\begin{multline}
\label{eq:definition-Dp}
D(p,\mu,V,h) \eqdef 2^{\frac{2p+1}{2(p+1)}} \; \left[ p^{\frac{1}{p+1}} +
  p^{-\frac{p}{p+1}} \right]^{1/2} \; \sqrt{M(\mu,V)} \; B_{2(p+1)}(h) \eqsp.
\end{multline}
\end{coro}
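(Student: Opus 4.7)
My plan is to adapt the proof of Theorem \ref{theo:variance-U-stat} by replacing its key estimate $|\nu(g)| \leq \supnorm{g}\tvnorm{\nu}$ by an interpolation between total variation and an $L^{p+1}$-moment. Recall that the bounded case expands $\PE_\mu[U_{n,m}(h)^2]$ as a sum of terms $\PE_\mu[h(Y_I)h(Y_J)]$ over pairs of $m$-subsets $(I,J)\subset\{1,\dots,n\}$, and the canonical identity $\int \pi(\rmd y)\, h(y_1,\dots,y_{m-1},y)=0$ combined with the Markov property reduces each term to a quantity of the form $(P^k(y,\cdot)-\pi)(\tilde g)$ for $\tilde g$ a slice of $h$ and $k$ the relevant ``gap'' between indices. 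For bounded $h$, pairing $\supnorm{\tilde g}$ with $\tvnorm{P^k(y,\cdot)-\pi}$ and invoking \eqref{eq:contraction-V-norm} produce the $\supnorm{h}^2\rho(k)$ factor in Theorem \ref{theo:variance-U-stat}.

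For unbounded $h$, I would instead use, for any signed measure $\nu=\nu_+-\nu_-$ and any $T>0$, the truncation bound
\begin{equation*}
|\nu(\tilde g)| \leq 2T\tvnorm{\nu} + \frac{\nu_+(|\tilde g|^{p+1})+\nu_-(|\tilde g|^{p+1})}{T^p}\eqsp,
\end{equation*}
and minimize over $T$. The minimum is attained at $T^{p+1}=p\bigl[\nu_+(|\tilde g|^{p+1})+\nu_-(|\tilde g|^{p+1})\bigr]/(2\tvnorm{\nu})$, and substituting back yields
\begin{equation*}
|\nu(\tilde g)| \leq 2\bigl(p^{1/(p+1)}+p^{-p/(p+1)}\bigr)\tvnorm{\nu}^{p/(p+1)}\bigl[\nu_+(|\tilde g|^{p+1})+\nu_-(|\tilde g|^{p+1})\bigr]^{1/(p+1)}\eqsp.
\end{equation*}
Squaring this for the variance bound recovers both the constant $p^{1/(p+1)}+p^{-p/(p+1)}$ appearing in $D(p,\mu,V,h)$ and the exponent $p/(p+1)$ on $\rho(k)$ in the bound.

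It remains to bound $\nu_\pm(|\tilde g|^{p+1})$. From $|h(y_1,\dots,y_m)|\leq B_{2(p+1)}(h)\sum_{j=1}^m V^{1/(2(p+1))}(y_j)$ and the convexity inequality $(\sum_{j=1}^m a_j)^{p+1}\leq m^p\sum_{j=1}^m a_j^{p+1}$, one obtains $|h|^{p+1}\leq m^p B_{2(p+1)}(h)^{p+1}\sum_{j=1}^m V^{1/2}(y_j)$ and the analogous bound for each slice. Integrating against iterates of $P$ from $\mu$ and applying \eqref{eq:definition-M} together with Cauchy--Schwarz to pass from $V^{1/2}$ to $\sqrt{M(\mu,V)}$ produces the factors $M(\mu,V)$, $B_{2(p+1)}(h)^2$ and $2^{(2p+1)/(2(p+1))}$ of $D(p,\mu,V,h)$, while the $m$ terms in $\sum_{j=1}^m V^{1/(2(p+1))}(y_j)$ explain why the constant $2$ in the prefactor $2^{m/2+1}$ of $C_{n,m}$ is replaced by $m$. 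The main obstacle is keeping the combinatorial bookkeeping of Theorem \ref{theo:variance-U-stat} intact: one must replicate the exact partition of pairs $(I,J)$ by the gap structure, verify that the moment estimates above are uniform in the conditioning history, and check that summing the interpolated bound over $(I,J)$ reconstructs exactly the series $\sum_{k=0}^n (k+1)^m\rho(k)^{p/(p+1)}$.
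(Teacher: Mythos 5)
Your proposal is correct and takes essentially the same route as the paper: your truncate-and-optimize interpolation between total variation and the $(p+1)$-st moment is exactly Lemma~\ref{lem:bound-Lp-norm}, applied (via Proposition~\ref{prop:bound-total-variation}) to $\PP^{\mathcal{I}}_\mu$ and the $\pi$-modified measure $\tilde{\PP}^{\mathcal{I}}_\mu$ on which each $f_\sigma$ integrates to zero, while your moment bounds via $B_{2(p+1)}(h)$, convexity and $M(\mu,V)$ reproduce the paper's estimate \eqref{eq:bound-Lp-norm-U-stat}, and the combinatorial bookkeeping over tuples with gap index $j_\star(\mathcal{I})=k$ is unchanged from Theorem~\ref{theo:variance-U-stat}. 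The only quibble is that the minimum of $T\mapsto 2T\,t+A\,T^{-p}$ equals $2^{p/(p+1)}\,(p^{1/(p+1)}+p^{-p/(p+1)})\,A^{1/(p+1)}\,t^{p/(p+1)}$ rather than $2\,(p^{1/(p+1)}+p^{-p/(p+1)})\,A^{1/(p+1)}\,t^{p/(p+1)}$, a harmless overestimate that only affects the power of $2$ in the explicit constant $D(p,\mu,V,h)$.
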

Using again the Hoeffding
  decomposition~(\ref{eq:hoeffding-decomposition}),
  Corollary~\ref{coro:variance-U-stat} can be extended to the case when $h$ is
  $d$-degenerated for $d \in \{0, \cdots, m-1\}$. Details are left to the
  reader. When used in combination with explicit ergodicity bounds
for Markov chains, Theorem~\ref{theo:variance-U-stat} and the corollaries can
be used to obtain non-asymptotic computable bounds for the variance of U- and
V-statistics. As a simple illustration, assume that the transition kernel $P$
is phi-irreducible, aperiodic and that
\begin{enumerate}
\item \emph{(Drift condition)} there exist a drift function $V: \Yset \to
  \coint{1, +\infty}$ and constants $1<b< \infty$, and $\lambda \in
  \ooint{0,1}$ such that
\begin{equation*}
P V \leq  \lambda V + b \eqsp.
\end{equation*}
\item \emph{(Minorization condition)} for any $d \geq 1$, the level sets $\{ V
  \leq d \}$ are petite for $P$.
\end{enumerate}
Then, there exists a probability distribution $\pi$ such that $\pi P = \pi$ and
$\pi(V) \leq b (1-\lambda)^{-1}$. In addition, there exist computable constants
$C < \infty$ and $\rho \in (0,1 )$ such that for any probability measures
$\mu,\mu'$ on $(\Yset,\Ysigma)$ and any $n \geq 0$,
$$
\tvnorm{\mu P^n - \mu' P^n} \leq C \ \rho^n \left[ \mu(V) + \mu'(V) \right] \eqsp;
$$
(see for example \cite{roberts:rosenthal:2004},
\cite{douc:moulines:rosenthal:2004} or \cite{baxendale:2005}).  Assumption
\textbf{A\ref{hyp:assumption-kernel}} is thus satisfied with $\rho(k)= C
\rho^k$ and we may thus apply Theorem~\ref{theo:variance-U-stat} to obtain a
non-asymptotic bound.

It can also be used to derive limiting theorems for U-statistics of Markov
chains. In what follows, as an illustration of our result, we derive a law of
large numbers which holds true under conditions which are, to the best of our
knowledge, the weakest known so far and more likely pretty close from being
optimal.
\begin{theo}
\label{theo:LGN-Ustat}
Assume \textbf{A\ref{hyp:assumption-kernel}} with
$\rho(n)=O\left(n^{-r}\right)$ for some $r > 1$.  Let $m \geq 1$ and $h:
\Yset^{m}\rightarrow \Rset$ be a symmetric function such that for some
$\delta>0$,
\begin{equation}
  \label{eq:Moment:LGN:Ustat}
  \sup_{(y_1,\dots,y_{m}) \in \Yset^m}
\frac{|h(y_1,\dots,y_m)|(\log^{+}|h(y_1,\dots,y_m)|)^{1+\delta}}{\sum_{i=1}^m
  V(y_i)} <\infty \eqsp.
\end{equation}
Then, for any probability measure $\mu$ on $(\Yset,\Ysigma)$ such that
$M(\mu,V) < \infty$,
\begin{equation}
  \label{eq:LGN-Limite1}
  \binom{n}{m}^{-1} \sum_{1\leq i_{1}<\cdots<i_{m}\leq n} \left\{ h(Y_{i_{1}},\dots,\
  Y_{i_{m}}) - \PE_\mu[h(Y_{i_{1}},\dots,\ Y_{i_{m}})] \right\} \to 0 \eqsp,
\quad \PP_\mu-\as\ \,
\end{equation}
when $n \to +\infty$ and
\begin{multline}
  \label{eq:LGN-Limite2}
\lim_{n \to +\infty} \binom{n}{m}^{-1} \sum_{1\leq i_{1}<\cdots<i_{m}\leq n}
\PE_\mu[h(Y_{i_{1}},\dots,\ Y_{i_{m}})]\\
= \int \pi(\rmd y_1) \cdots \pi(\rmd y_m) h(y_1, \cdots, y_m) \eqsp.
\end{multline}
\end{theo}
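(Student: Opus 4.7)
The plan is to combine the Hoeffding decomposition~\eqref{eq:hoeffding-decomposition} with a truncation argument driven by Theorem~\ref{theo:variance-U-stat}. By~\eqref{eq:hoeffding-decomposition},
\[
U_{n,m}(h)-\pi^{\otimes m}h \;=\; \sum_{c=1}^{m}\binom{m}{c}\,U_{n,c}(\pi_{c,m}h),
\]
where each $\pi_{c,m}h$ is a $\pi$-canonical kernel that inherits a log-moment bound of the type~\eqref{eq:Moment:LGN:Ustat}. Hence it suffices to prove, for every $c\in\{1,\dots,m\}$ and every such canonical $g$, that $U_{n,c}(g)\to 0$ $\PP_\mu$-a.s., and separately to establish the convergence of expectations~\eqref{eq:LGN-Limite2}. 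Combined, these two statements deliver both~\eqref{eq:LGN-Limite1} and~\eqref{eq:LGN-Limite2}.

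For the a.s.\ part, fix $c$ and truncate: write $g=g_{\le T}+g_{>T}$ with $g_{\le T}=g\,\1_{|g|\le T}$. Since $g_{\le T}$ need not be canonical, I would replace it by its fully canonical part $\widetilde g_{\le T}$, obtained by subtracting its Hoeffding projections $\pi_{c',c}(g_{\le T})$ for $c'<c$; because $g$ itself is canonical, those projections coincide up to sign with $\pi_{c',c}(g_{>T})$ and are controlled by $\pi^{\otimes c}(|g|\,\1_{|g|>T})=O((\log T)^{-(1+\delta)})$ thanks to~\eqref{eq:Moment:LGN:Ustat} and $\pi(V)<\infty$. Applying Theorem~\ref{theo:variance-U-stat} to $\widetilde g_{\le T}$ yields
\[
\lpnorm{U_{n,c}(\widetilde g_{\le T})}{\mu}{2}\;\le\;\text{cst}\,\sqrt{M(\mu,V)}\,T\,\Bigl(\textstyle\sum_{k=0}^{n}(k+1)^{c}\rho(k)\Bigr)^{1/2}\,n^{-c/2},
\]
which under $\rho(k)=O(k^{-r})$, $r>1$, is at worst of order $T\,n^{-(r-1)/2}$. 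I would work along the dyadic subsequence $n_{j}=2^{j}$ with $T_{j}=n_{j}^{\alpha}$, $0<\alpha<(r-1)/2$, so that the variance squared is summable in $j$; the tail $\PE_\mu|U_{n_j,c}(g_{>T_j})|\le \PE_\mu[|g(Y_{i_1},\dots,Y_{i_c})|\,\1_{|g|>T_j}]=O(j^{-(1+\delta)})$ by~\eqref{eq:Moment:LGN:Ustat} together with $\PE_\mu[V(Y_\ell)]\le M(\mu,V)$ is summable as well. Markov's inequality and Borel--Cantelli then give $U_{n_j,c}(g)\to 0$ $\PP_\mu$-a.s. The extension from the dyadic grid to the full sequence proceeds by a standard block estimate: for $n_j\le n<n_{j+1}$, one decomposes $U_{n,c}(g)-U_{n_j,c}(g)$ according to whether a $c$-tuple lies entirely in $\{1,\dots,n_{j}\}$ (absorbing a factor $1-(n_j/n)^{c}=O(1)$) or contains at least one index in $\{n_{j}+1,\dots,n_{j+1}\}$, the latter contribution being controlled by reapplying Theorem~\ref{theo:variance-U-stat} to the corresponding partial U-statistic.

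For~\eqref{eq:LGN-Limite2}, iterated use of the $V$-contraction~\eqref{eq:contraction-V-norm} combined with the pointwise bound $|h|\le \text{cst}\sum_{j=1}^{m}V(y_{j})$ coming from~\eqref{eq:Moment:LGN:Ustat} shows that $\PE_\mu[h(Y_{i_1},\dots,Y_{i_m})]\to\pi^{\otimes m}h$ whenever $i_{1}$ and every gap $i_{\ell+1}-i_{\ell}$ tend to infinity; a Ces\`aro argument over $\{1,\dots,n\}^{m}_{<}$, the proportion of whose $m$-tuples failing one of those conditions is $o(1)$, then closes~\eqref{eq:LGN-Limite2}.

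The main obstacle will be the joint balancing of the truncation level $T_{j}$: it must be large enough to make both the re-canonicalization remainder and the Borel--Cantelli tail summable, yet small enough for the variance estimate---which under merely polynomial mixing carries the possibly diverging factor $\sum_{k\le n}(k+1)^{c}\rho(k)$---to remain summable on the dyadic scale. The log-moment strength $1+\delta$ in~\eqref{eq:Moment:LGN:Ustat} is essentially consumed by this balancing, which is what makes the assumptions close to optimal.
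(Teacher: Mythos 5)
Your overall strategy coincides with the paper's: Hoeffding decomposition to reduce to $\pi$-canonical kernels, truncation at a polynomially growing level, the variance bound of Theorem~\ref{theo:variance-U-stat} along a geometric subsequence combined with Borel--Cantelli, and the observation that the non-top Hoeffding projections of the truncated kernel equal $-\pi_{\ell,c}[g\1_{\{|g|>T\}}]$ because $g$ is canonical. However, the step that passes from the dyadic grid to the full sequence has a genuine gap. Your block estimate requires controlling $\sup_{n_j\le n<n_{j+1}}$ of the partial U-statistic over $c$-tuples meeting the new block $\{n_j+1,\dots,n_{j+1}\}$, and Theorem~\ref{theo:variance-U-stat} only gives a second-moment bound at each \emph{fixed} $n$, not a maximal inequality over the block. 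A union bound over the roughly $n_j$ values of $n$ in a dyadic block multiplies the Chebyshev bound $T_j^2\,n^{1-r}$ by $n_j$, yielding $n_j^{2\alpha+2-r}$, which is summable only when $r>2$, whereas the theorem is claimed for all $r>1$. The paper sidesteps this entirely: it first reduces to nonnegative $h$ (splitting into positive and negative parts), so that the unnormalized sums $\sum_{1\le i_1<\cdots<i_m\le n}h(Y_{i_1},\dots,Y_{i_m})$ are nondecreasing in $n$, and then applies the sandwich Lemma~\ref{lem:easy-lemma} with geometric ratio $\alpha$ arbitrarily close to $1$. You do not perform this reduction, and since the canonical kernels $\pi_{c,m}h$ are signed, monotonicity is not available to you; as stated, your interpolation does not close for $1<r\le 2$.

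A second, more minor gap concerns \eqref{eq:LGN-Limite2}: you propose to deduce $\PE_\mu[h(Y_{i_1},\dots,Y_{i_m})]\to\pi^{\otimes m}h$ from the contraction \eqref{eq:contraction-V-norm} together with the pointwise domination $|h|\le \mathrm{cst}\sum_{j}V(y_j)$. But \eqref{eq:contraction-V-norm} is a total-variation bound and only controls expectations of \emph{bounded} functions; it does not by itself transfer to functions dominated by $V$. One must first truncate, apply the TV bound to $h_\tau$, and then use the uniform-integrability estimate $\PE_\mu\bigl[|h|\1_{\{|h|\ge\tau\}}\bigr]=O\bigl((\log^+\tau)^{-(1+\delta)}\bigr)$ furnished by \eqref{eq:Moment:LGN:Ustat} and $M(\mu,V)<\infty$, which is exactly how the paper proceeds. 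This part of your argument is repairable, but as written the limiting step is not justified.
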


\section{Proof of Theorem~\ref{theo:variance-U-stat} and Corollary~\ref{coro:variance-U-stat}}
\label{sec:proof:variance:Ustat}
For any probability measure $\mu$ on $(\Yset,\Ysigma)$, for any positive
integer $\ell$ and any ordered $\ell$-uplet $k_0= 0 \leq k_1 \leq \dots \leq
k_\ell$, consider the probability measure $\PP_{\mu}^{k_1,k_2,\dots,k_\ell}$
defined for any nonnegative measurable function $f: \Yset^\ell \to \Rset_+$, by
\begin{equation}
\label{eq:probability-on-product}
\PP_{\mu}^{k_1,k_2,\dots,k_\ell}(f) \eqdef \idotsint \mu(\rmd y_0) \prod_{i=1}^\ell P^{k_i-k_{i-1}}(y_{i-1},\rmd y_{i}) f(\chunk{y}{1}{\ell}) \, ,
\end{equation}
where $\chunk{y}{1}{\ell}\eqdef (y_{1},\dots,y_\ell)$. Note that, by construction,
\[
\PE_\mu\left[ f(Y_{k_1},\dots,Y_{k_\ell}) \right]=  \PP_{\mu}^{k_1,k_2,\dots,k_\ell}(f) \eqsp.
\]
For any positive integer $m$ and any ordered $2m$-uplet $\mathcal{I}=(1 \leq i_1 \leq i_2 \leq \dots \leq i_{2m})$, we denote
for $\ell \in \{1,\dots,m\}$,
\begin{align}
\label{eq:definition-j}
&j_{\ell}(\mathcal{I}) \eqdef \min(i_{2\ell-1}-i_{2\ell-2}, i_{2\ell}-i_{2\ell-1}) \, ,\\
&\label{eq:definition-j-star} j_\star(\mathcal{I}) \eqdef
\max\left[j_1(\mathcal{I}),j_2(\mathcal{I}),\dots,j_m(\mathcal{I})\right]\eqsp,
\end{align}
where, by convention, we set $i_0=1$.
Denote by $\mathcal{B}_+(\Yset^{2m})$ the set of nonnegative measurable function $f: \Yset^{2m} \to \Rset_+$.
For any probability measure $\mu$ on $(\Yset,\Ysigma)$ and any ordered $2m$-uplet $\mathcal{I}$, denote
$\PP^{\mathcal{I}}_{\mu} \eqdef P^{i_1,\dots,i_{2m}}_\mu$. We consider
the probability measure $\tilde{\PP}_\mu^{\mathcal{I}}= \tilde{\PP}_\mu^{i_1,\dots,i_{2m}}$ on $(\Yset^{2m},\Ysigma^{\otimes 2m})$ given for $f \in \mathcal{B}_+(\Yset^{2m})$ by
\begin{equation}
\label{eq:definition-tilde-PP-1}
\tilde{\PP}^{\mathcal{I}}_\mu(f)
\eqdef \int \pi (\rmd y_{1}) \PP^{i_{2},\dots,i_{2m}}_{\mu} (\rmd \chunk{y}{2}{2m})f( \chunk{y}{1}{2m}) \eqsp,
\end{equation}
if $\inf \left\{ k \in \{1,\dots, m \}, j_\star(\mathcal{I}) = j_k(\mathcal{I}) \right\}=1$ and
\begin{equation}
\label{eq:definition-tilde-PP}
\tilde{\PP}^{\mathcal{I}}_\mu(f)
\eqdef \int \PP^{i_1,\dots,i_{2{\ell}-2}}_{\mu}(\rmd \chunk{y}{1}{2{\ell}-2})
\pi (\rmd y_{2{\ell}-1}) \PP^{i_{2{\ell}},\dots,i_{2m}}_{\mu}
(\rmd \chunk{y}{2{\ell}}{2m})f( \chunk{y}{1}{2m}) \eqsp,
\end{equation}
if $\ell=\inf \left\{ k \in \{1,\dots, m \}, j_\star(\mathcal{I}) = j_k(\mathcal{I}) \right\} \in \{2,\dots,m \}$.
For any permutation $\sigma$ on $\{1,\dots,2m\}$, define $f_\sigma: \Yset^{2m} \to \Rset$ the function
\begin{equation}
\label{eq:definition-f-sigma}
f_\sigma(y_1,\dots,y_{2m}) \eqdef  h\left(y_{\sigma(1)},\dots,y_{\sigma(m)}\right) h\left(y_{\sigma(m+1)},\dots,y_{\sigma(2m)}\right) \,.
\end{equation}
Since the function $h$ is $\pi$-canonical, it follows from the definition of $\tilde{\PP}_\mu^{\mathcal{I}}$ that, for
any ordered $2m$-uplet $\mathcal{I}$ and any permutation $\sigma$,
\begin{equation}
\label{eq:expectation-pi-canonical}
\tilde{\PP}^{\mathcal{I}}_\mu(f_\sigma) = 0 \eqsp.
\end{equation}
This relation plays a key role in all what follows and is the main motivation for considering the probability measures
$\tilde{\PP}^{\mathcal{I}}_\mu$.

\begin{prop}
\label{prop:bound-total-variation}
Assume
\textbf{A\ref{hyp:assumption-kernel}}-\textbf{A\ref{hyp:bound-V-norm-canonical-U-stat}}.
Then, for any probability measure $\mu$, any positive integer $n$ and any
ordered $2m$-uplet $\mathcal{I}$ in $\{1, \cdots, n\}$,
\begin{equation}
\label{eq:key-inequality-canonical-kernel}
\tvnorm{\PP_\mu^{\mathcal{I}}- \tilde{\PP}^{\mathcal{I}}_\mu} \leq 4 \, \rho\left(j_\star(\mathcal{I})\right) \, M(\mu,V)  \eqsp,
\end{equation}
where the sequence $(\rho(n))_{n \in \Nset}$, $M(\mu,V)$ and
$j_\star(\mathcal{I})$ are defined respectively in
\eqref{eq:contraction-V-norm}, \eqref{eq:definition-M}, and
\eqref{eq:definition-j-star}.
\end{prop}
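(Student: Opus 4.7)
The plan is to bound $\tvnorm{\PP_\mu^{\mathcal{I}} - \tilde{\PP}_\mu^{\mathcal{I}}}$ by isolating the pair $(y_{2\ell-1}, y_{2\ell})$, where $\ell$ realizes $j_\star(\mathcal{I}) = j_\ell(\mathcal{I})$, and then applying the $V$-norm contraction \eqref{eq:contraction-V-norm} twice through a single well-chosen intermediate measure. Write $k_1 \eqdef i_{2\ell-1} - i_{2\ell-2}$ and $k_2 \eqdef i_{2\ell} - i_{2\ell-1}$, so that $\min(k_1,k_2) = j_\star(\mathcal{I})$.

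First I would observe that both $\PP_\mu^{\mathcal{I}}$ and $\tilde{\PP}_\mu^{\mathcal{I}}$ assign the same Markov kernel $\prod_{k=2\ell+1}^{2m} P^{i_k - i_{k-1}}(y_{k-1}, \rmd y_k)$ to the tail $(y_{2\ell+1}, \dots, y_{2m})$ given $y_{2\ell}$, and (when $\ell \geq 2$) the same head marginal $\PP_\mu^{i_1, \dots, i_{2\ell-2}}$ to $(y_1, \dots, y_{2\ell-2})$. Since the total variation norm is non-increasing under extension by a common Markov kernel, the target TV distance reduces to an integral over the head of the TV distance between the conditional laws of $(y_{2\ell-1}, y_{2\ell})$ given $y_{2\ell-2}$. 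Under $\PP_\mu^{\mathcal{I}}$ this conditional is $P^{k_1}(y_{2\ell-2}, \rmd y_{2\ell-1}) P^{k_2}(y_{2\ell-1}, \rmd y_{2\ell})$, while under $\tilde{\PP}_\mu^{\mathcal{I}}$ it is $\pi(\rmd y_{2\ell-1}) \otimes (\mu P^{i_{2\ell}})(\rmd y_{2\ell})$, independent of $y_{2\ell-2}$.

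Next, I insert the intermediate measure $\pi(\rmd y_{2\ell-1}) P^{k_2}(y_{2\ell-1}, \rmd y_{2\ell})$ and apply the triangle inequality. By the same kernel-contraction fact applied with $K = P^{k_2}$, the first piece collapses to $\tvnorm{\delta_{y_{2\ell-2}} P^{k_1} - \pi P^{k_1}} \leq \rho(k_1) [V(y_{2\ell-2}) + \pi(V)]$, using \eqref{eq:contraction-V-norm} and the stationarity $\pi P^{k_1} = \pi$. The second piece reduces to $\int \pi(\rmd y_{2\ell-1}) \tvnorm{\delta_{y_{2\ell-1}} P^{k_2} - (\mu P^{i_{2\ell-1}}) P^{k_2}} \leq \rho(k_2) [\pi(V) + (\mu P^{i_{2\ell-1}})(V)]$, using $\mu P^{i_{2\ell}} = (\mu P^{i_{2\ell-1}}) P^{k_2}$. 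Integrating the first piece over the head and bounding each $V$-integral by $M(\mu, V)$, each of the two pieces is at most $2 \rho(j_\star(\mathcal{I})) M(\mu, V)$ by the monotonicity of $\rho$, giving the claimed $4 \rho(j_\star(\mathcal{I})) M(\mu, V)$ in total.

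The edge case $\ell = 1$ is structurally identical with an empty head: the starting gap $i_1$ plays the role of $k_1$, and the convention $i_0 = 1$ gives $i_1 \geq j_\star(\mathcal{I}) + 1$, hence $\rho(i_1) \leq \rho(j_\star(\mathcal{I}))$. The key conceptual step is the choice of intermediate measure: a naive strategy that first decouples the two chain blocks and \emph{then} separately replaces the marginal of $y_{2\ell-1}$ by $\pi$ produces three contraction terms and yields the weaker constant $6$, whereas fusing the $\pi$-substitution with one of the mixing steps inside a single intermediate measure delivers the sharp constant $4$.
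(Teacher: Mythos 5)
Your proof is correct and is essentially the paper's own argument: both bound $\tvnorm{\PP_\mu^{\mathcal{I}}-\tilde{\PP}_\mu^{\mathcal{I}}}$ by a single triangle inequality through one intermediate measure, producing exactly the two terms $2\rho(i_{2\ell-1}-i_{2\ell-2})M(\mu,V)$ and $2\rho(i_{2\ell}-i_{2\ell-1})M(\mu,V)$ via \eqref{eq:contraction-V-norm} before invoking the monotonicity of $\rho$ (with the same handling of $\ell=1$ through $\rho(i_1)\leq\rho(i_1-1)$). The only difference is the order of the two substitutions --- the paper first decouples the block $(y_{2\ell},\dots,y_{2m})$ and then replaces the conditional law of $y_{2\ell-1}$ given $y_{2\ell-2}$ by $\pi$ in one fused step --- so your closing remark that the decouple-first route necessarily costs three terms and the constant $6$ is inaccurate: the paper's ordering also yields two terms and the constant $4$.
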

\begin{proof}
  Let $ \mathcal{I}= (1 \leq i_1 \leq i_2 \leq \dots \leq i_{2m} \leq n)$. To
  simplify the notation, in what follows, the dependence in $\mathcal{I}$ of
  $j_1,\dots,j_m$ - defined in \eqref{eq:definition-j} - is implicit.  Assume
  first that $j_\star= j_1$.

Let $f \in \mathcal{B}_+(\Yset^{2m})$. The definition of \eqref{eq:probability-on-product} implies that
\[
\PP^{\mathcal{I}}_{\mu}(f) \eqdef P^{i_1,\dots,i_{2m}}_\mu(f) =\int \mu P^{i_1}(\rmd y_{1})
\PP^{i_2-i_1,\dots,i_{2m}-i_1}_{y_{1}}(\rmd \chunk{y}{2}{m})
 f(\chunk{y}{1}{2m}) \, .
\]
Combining this expression with the definition \eqref{eq:definition-tilde-PP-1}
of $\tilde{\PP}^{\mathcal{I}}_\mu$ yields
\begin{equation}
\label{eq:bound}
\left|\PP_\mu^{\mathcal{I}}(f)- \tilde{\PP}_{\mu}^{\mathcal{I}}(f) \right| \leq T_1 + T_2  \eqsp,
\end{equation}
with
\begin{align*}
  T_1 &\eqdef \left|\int \left[ \mu P^{i_1}(\rmd y_{1})-\pi(\rmd y_{1}) \right] \PP_\mu^{i_2,\dots,i_{2m}}(\rmd \chunk{y}{2}{2m}) f(\chunk{y}{1}{2m}) \right| \eqsp, \\
  T_2 & \eqdef \left|\int \mu P^{i_1}(\rmd y_{1}) \left[
      \PP^{i_2-i_1,\dots,i_{2m}-i_1}_{y_1}(\rmd \chunk{y}{2}{2m}) -
      \PP^{i_2,\dots,i_{2m}}_\mu(\rmd \chunk{y}{2}{2m})\right]
    f(\chunk{y}{1}{2m}) \right| \eqsp.
\end{align*}
Consider first $T_1$.  Since $\left| \int \PP_{\mu}^{i_2,\dots,i_{2m}} (\rmd
  \chunk{y}{2}{2m}) f(\chunk{y}{1}{2m}) \right| \leq \supnorm{f}$,
\textbf{A\ref{hyp:assumption-kernel}} and (\ref{eq:definition-M}) imply that
$$T_1\leq \tvnorm{\mu P^{i_1}-\pi} \supnorm{f} \leq \rho(i_1) \, \left[ \mu(V)
  + \pi(V) \right] \, \supnorm{f} \leq 2 \rho(i_1) M(\mu,V) \eqsp,
$$
where we have used that $\mu(V) \leq M(\mu,V)$ and $\pi(V) \leq M(\mu,V)$.
On the other hand, for any bounded measurable function $g: \Yset^{2m-1} \to
\Rset$, and $y \in \Yset$,
\[
\PP_{y}^{i_2-i_1,\dots,i_{2m}-i_1}(g) = \int \delta_y(\rmd y_1) P^{i_2-i_1}(y_1,\rmd y_2) \PP_{y_2}^{i_3-i_2,\dots,i_{2m}-i_2}(\rmd \chunk{y}{3}{2m}) g(\chunk{y}{2}{2m})
\]
and
\[
\PP_\mu^{i_2,\dots,i_{2m}} (g)= \int \mu P^{i_1}(\rmd y_1) P^{i_2-i_1}(y_1,\rmd y_2) \PP_{y_2}^{i_3-i_2,\dots,i_{2m}-i_2}(\rmd \chunk{y}{3}{2m}) g(\chunk{y}{2}{2m}) \eqsp.
\]
Therefore, under \textbf{A\ref{hyp:assumption-kernel}},
\[
\left| \PP_{y}^{i_{2}-i_{1},\dots,i_{2m}-i_{1}} (g) - \PP_\mu^{i_{2},\dots,i_{2m}} (g) \right|
\leq  \rho(i_{2}-i_{1}) \left[ V(y) + \mu P^{i_1} (V) \right]\supnorm{g} \eqsp.
\]
Integrating this bound shows that $T_2 \leq 2 \rho(i_2-i_1) M(\mu,V) \supnorm{f}$, where $M(\mu,V)$ is defined in \eqref{eq:definition-M}.
In conclusion we get
\begin{equation}\label{eq:V-ergodicity}
\left|\PP_\mu^{\mathcal{I}}(f)- \tilde{\PP}_{\mu}^{\mathcal{I}}(f) \right| \\
\leq 2 \left[\rho(i_2-i_1)+\rho(i_1) \right]\, M(\mu,V) \, \supnorm{f} \eqsp.
\end{equation}

Assume now that, for some $\ell \in \{2,\dots,m\}$, $j_\star= j_\ell$. With these notations, for any nonnegative function $f: \Yset^{2m} \to \Rset$,
\[
\PP^{\mathcal{I}}_\mu(f) =\int \PP^{i_1,\dots,i_{2\ell-1}}_{\mu}(\rmd \chunk{y}{1}{2\ell-1})   \PP^{i_{2\ell}-i_{2\ell-1},\dots,i_{2m}-i_{2\ell-1}}_{y_{2\ell-1}}
\left(\rmd \chunk{y}{2\ell}{2m}\right) f\left( \chunk{y}{1}{2m}\right) \eqsp.
\]
Combining this expression with the definition \eqref{eq:definition-tilde-PP} of
$\tilde{\PP}^{\mathcal{I}}_\mu$, we get
\begin{equation}
\left|\PP^{\mathcal{I}}_\mu(f)- \tilde{\PP}_{\mu}^{\mathcal{I}}(f) \right| \leq T_1 + T_2 \eqsp,\\
\end{equation}
with
\begin{multline*}
T_1 = \left|\int \PP^{i_1,\dots,i_{2\ell-2}}_{\mu}(\rmd \chunk{y}{1}{2\ell-2}) [P^{i_{2\ell-1}-i_{2\ell-2}}(y_{2\ell-2},\rmd y_{2\ell-1})-\pi(
\rmd y_{2\ell-1})] \right. \\
\left. \times \phantom{\int} \PP^{i_{2\ell},\dots,i_{2m}}_{\mu}(\rmd \chunk{y}{2\ell}{2m}) f(\chunk{y}{1}{2m})\right| \eqsp,
\end{multline*}
and
\begin{multline*}
T_2 = \left|\int \PP^{i_1,\dots,i_{2\ell-1}}_{\mu}(\rmd \chunk{y}{1}{{2\ell-1}}) \right.\\
\left. \phantom{\int} \times \left[ \PP^{i_{2\ell}-i_{2\ell-1},\dots,i_{2m}-i_{2\ell-1}}_{y_{2\ell-1}}(\rmd \chunk{y}{2\ell}{2m})-
\PP^{i_{2\ell},\dots,i_{2m}}_{\mu }(\rmd \chunk{y}{2\ell}{2m})\right]f(\chunk{y}{1}{2m})\right| \eqsp.
\end{multline*}
Consider first $T_1$.
Under \textbf{A\ref{hyp:assumption-kernel}}, \eqref{eq:contraction-V-norm}, for any $y_{2\ell-2} \in \Yset$, and any bounded
measurable function $g: \Yset \mapsto \Rset$,
\begin{multline*}
\int [P^{i_{2\ell-1}-i_{2\ell-2}}(y_{2\ell-2},\rmd y_{2\ell-1})-\pi( \rmd y_{2\ell-1})] g(y_{2\ell-1}) \\
\leq \rho\left(i_{2\ell-1}-i_{2\ell-2}\right) \, \left[ V(y_{2\ell-2}) + \pi(V) \right] \supnorm{g} \eqsp.
\end{multline*}
Applying this relation with
$$
g_{\chunk{y}{1}{2\ell-2}}(y_{2\ell-1})= \idotsint \PP^{i_{2\ell},\dots,i_{2m}}_{\mu}(\rmd \chunk{y}{2\ell}{2m}) f(\chunk{y}{1}{2\ell-2},y_{2\ell-1},\chunk{y}{2\ell}{2m}) \eqsp,
$$
and using that, for any $\chunk{y}{1}{2\ell-2} \in \Yset^{2\ell-2}$, $\supnorm{g_{\chunk{y}{1}{2\ell-2}}} \leq \supnorm{f}$, yields to
\begin{multline*}
T_1 \leq \rho(i_{2\ell-1}-i_{2\ell-2}) \left[ \mu P^{i_{2\ell-2}} (V) + \pi(V)\right] \supnorm{f} \\
\leq 2 \rho(i_{2\ell-1}-i_{2\ell-2}) M(\mu,V) \supnorm{f}  \eqsp.
\end{multline*}
Consider now $T_2$. Note that, for any bounded measurable function $g: \Yset^{2m-2\ell+1} \to \Rset$ that
\begin{multline*}
\PP_{y_{2\ell-1}}^{i_{2\ell}-i_{2\ell-1},\dots,i_{2m}-i_{2\ell-1}} (g) = \int   P^{i_{2\ell}-i_{2\ell-1}}(y_{2\ell-1},\rmd y_{2 \ell}) \\ \times \PP_{y_{2\ell}}^{i_{2\ell+1}-i_{2\ell},\dots,i_{2m}-i_{2\ell}}(\rmd \chunk{y}{2\ell+1}{2m}) g(\chunk{y}{2\ell}{2m}),\\
\end{multline*}
and
\begin{multline*}
\PP_\mu^{i_{2\ell},\dots,i_{2m}} (g) = \int \mu P^{i_{2\ell-1}} (\rmd y_{2\ell-1}) P^{i_{2\ell}-i_{2\ell-1}}(y_{2\ell-1},\rmd y_{2\ell}) \\ \times \PP_{y_{2\ell}}^{i_{2\ell+1}-i_{2\ell},\dots,i_{2m}-i_{2\ell}}(\rmd \chunk{y}{2\ell+1}{2m}) g(\chunk{y}{2\ell}{2m}) \eqsp.
\end{multline*}
Therefore, under \textbf{A\ref{hyp:assumption-kernel}},
for any bounded measurable function  $g: \Yset^{2m-2\ell+1} \to \Rset$ and $y_{2\ell-1} \in \Yset$,
\begin{multline*}
\left| \PP_{y_{2\ell-1}}^{i_{2\ell}-i_{2\ell-1},\dots,i_{2m}-i_{2\ell-1}} (g) - \PP_\mu^{i_{2\ell},\dots,i_{2m}} (g) \right|
\\
\leq \rho(i_{2\ell}-i_{2\ell-1}) \left[ V(y_{2\ell-1}) + M(\mu,V) \right]\supnorm{g} \eqsp.
\end{multline*}
Therefore, by integrating this bound \wrt\ $ \PP^{i_1,\dots,i_{2\ell-1}}_{\mu}$ yields to the bound
\[
T_2 \leq 2 \rho(i_{2\ell}-i_{2\ell-1}) M(\mu,V) \supnorm{f} \eqsp,
\]
which concludes the proof.
\end{proof}

\begin{lemma}
\label{lem:bound-Lp-norm}
Let $(\Xset,\Xsigma)$ be a measurable space.
Let $\xi$ and $\xi'$ be two probability measures on $(\Xset,\Xsigma)$ and $p \in \coint{0,+\infty}$.
Then, for any measurable function $f$ satisfying $\xi(|f|^{1+p}) + \xi'(|f|^{1+p}) < \infty$,
\[
\left| \xi(f) - \xi'(f) \right| \leq C(p) \left[ \xi(|f|^{1+p}) + \xi'(|f|^{1+p}) \right]^{1/(p+1)} \tvnorm{\xi - \xi'}^{p/(p+1)} \eqsp,
\]
where $C(p) \eqdef \left[ p^{1/(p+1)} + p^{-p/(p+1)} \right]$.
\end{lemma}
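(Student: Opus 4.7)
The plan is to use a simple truncation argument. Fix a threshold $M>0$ and write $f = f \mathbbm{1}_{\{|f| \leq M\}} + f \mathbbm{1}_{\{|f| > M\}}$, which yields
$$|\xi(f) - \xi'(f)| \leq |\xi(f \mathbbm{1}_{\{|f| \leq M\}}) - \xi'(f \mathbbm{1}_{\{|f| \leq M\}})| + \xi(|f| \mathbbm{1}_{\{|f| > M\}}) + \xi'(|f| \mathbbm{1}_{\{|f| > M\}}).$$

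For the bounded contribution, since $|f \mathbbm{1}_{\{|f| \leq M\}}| \leq M$, the duality characterization of the total variation norm gives the bound $M\,\tvnorm{\xi - \xi'}$. For each tail contribution, Markov's inequality in the form $|f| \mathbbm{1}_{\{|f| > M\}} \leq M^{-p} |f|^{p+1} \mathbbm{1}_{\{|f| > M\}}$ yields $\xi(|f| \mathbbm{1}_{\{|f| > M\}}) \leq M^{-p}\, \xi(|f|^{p+1})$, and similarly for $\xi'$. Writing $T \eqdef \tvnorm{\xi - \xi'}$ and $A \eqdef \xi(|f|^{p+1}) + \xi'(|f|^{p+1})$, I therefore obtain the joint bound
$$|\xi(f) - \xi'(f)| \leq M\, T + M^{-p}\, A, \qquad \text{for every } M > 0.$$

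The last step is to minimize the right-hand side over $M > 0$. Differentiating gives the optimum at $M^\star = (pA/T)^{1/(p+1)}$ (assuming $p>0$ and $T,A>0$; the degenerate cases $T=0$, $A=0$, and $p=0$ are handled trivially with the convention $0^0 = 1$). Substituting $M^\star$ back produces $[p^{1/(p+1)} + p^{-p/(p+1)}]\, A^{1/(p+1)}\, T^{p/(p+1)}$, which is exactly $C(p)\, A^{1/(p+1)}\, T^{p/(p+1)}$.

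There is no real obstacle: the argument is entirely elementary, with the only delicate point being the consistent treatment of the boundary case $p=0$ (where the inequality reduces to the trivial bound $|\xi(f) - \xi'(f)| \leq \xi(|f|) + \xi'(|f|)$) and of the degenerate cases $T = 0$ or $A = 0$, all of which follow directly from the intermediate inequality before optimization.
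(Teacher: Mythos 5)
Your proof is correct and follows essentially the same route as the paper: truncate at level $M$, bound the truncated part by $M\tvnorm{\xi-\xi'}$ via total-variation duality, bound the tails by $M^{-p}[\xi(|f|^{1+p})+\xi'(|f|^{1+p})]$, and optimize over $M$. You have in fact written it out more carefully than the paper, whose displayed intermediate bound carries an extraneous (and harmless) factor $\supnorm{f}$ that your version correctly omits, and you also make the optimization and the degenerate cases ($p=0$, $T=0$, $A=0$) explicit where the paper simply says ``optimize in $M$.''
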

\begin{proof}
For any $M > 0$,
\begin{align*}
\left| \xi(f) - \xi'(f) \right|
&\leq  M \tvnorm{\xi-\xi'} \supnorm{f} + \xi\left[|f| \1 \{ |f| \geq M \}\right] + \xi'\left[ |f| \1 \{ |f| \geq M \} \right] \\
&\leq  M \tvnorm{\xi-\xi'} \supnorm{f} + M^{-p} \left[ \xi(|f|^{1+p}) + \xi'(|f|^{1+p}) \right] \eqsp.
\end{align*}
The proof follows by optimizing in $M$.
\end{proof}

\begin{prop}
\label{prop:bound-second-moment-canonical-U-stat}
Assume
\textbf{A\ref{hyp:assumption-kernel}-A\ref{hyp:bound-V-norm-canonical-U-stat}}.
Then, for any ordered $2m$-uplet $\mathcal{I}= (1 \leq i_1 \leq \dots \leq
i_{2m} \leq n)$, any permutation $\sigma$ on $\{1,\dots,2m\}$, and any initial
distribution $\mu$ on $(\Yset,\Ysigma)$,
\begin{equation}
\label{eq:bound-second-moment-canonical-U-stat-1}
\left| \PE_{\mu} \left[ f_\sigma(Y_{i_1}, \dots, Y_{i_{2m}}) \right] \right| \leq 4 M(\mu,V) \,  \rho\left(j_\star(\mathcal{I})\right) \, \supnorm{h}^2\eqsp,
\end{equation}
where the sequence $(\rho(n))_{n \in \Nset}$, the index $j_\star(\mathcal{I})$
and the function $f_\sigma$ are defined in \eqref{eq:contraction-V-norm},
\eqref{eq:definition-j-star}, and \eqref{eq:definition-f-sigma}, respectively.
If, for some $p \in \coint{0,\infty}$, the constant $B_{2(p+1)}(h)$, defined in
\eqref{eq:bound-h} is finite, then
\begin{equation}
\label{eq:bound-second-moment-canonical-U-stat-2}
\left| \PE_{\mu} \left[ f_\sigma(Y_{i_1}, \dots, Y_{i_{2m}}) \right]
\right| \leq  m^2 \  D(p,\mu,V,h)^2 \left(\rho\left(j_\star(\mathcal{I})\right)\right)^{\frac{p}{(p+1)}}
\end{equation}
where the constant $D(p,\mu,V,h)$ is defined in \eqref{eq:definition-Dp}.
\end{prop}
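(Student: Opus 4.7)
The plan is to combine Proposition~\ref{prop:bound-total-variation} with the key identity $\tilde{\PP}^{\mathcal{I}}_\mu(f_\sigma) = 0$ recorded in \eqref{eq:expectation-pi-canonical}. Since $\PE_\mu[f_\sigma(Y_{i_1},\dots,Y_{i_{2m}})] = \PP_\mu^{\mathcal{I}}(f_\sigma)$, we can rewrite
\[
\left|\PE_\mu[f_\sigma(Y_{i_1},\dots,Y_{i_{2m}})]\right| = \left|\PP_\mu^{\mathcal{I}}(f_\sigma) - \tilde{\PP}_\mu^{\mathcal{I}}(f_\sigma)\right|,
\]
and the whole proof reduces to estimating this difference of integrals against two close measures.

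For the bounded bound \eqref{eq:bound-second-moment-canonical-U-stat-1}, I would use only the standard $L^\infty$ duality $|\xi(f)-\xi'(f)| \leq \tvnorm{\xi-\xi'}\supnorm{f}$. Since $\supnorm{f_\sigma}\leq\supnorm{h}^2$ by \eqref{eq:definition-f-sigma}, Proposition~\ref{prop:bound-total-variation} gives the result directly.

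For \eqref{eq:bound-second-moment-canonical-U-stat-2}, I would replace the $L^\infty$ duality by Lemma~\ref{lem:bound-Lp-norm} applied to $f_\sigma$ with $\xi=\PP_\mu^{\mathcal{I}}$ and $\xi'=\tilde{\PP}_\mu^{\mathcal{I}}$. This requires controlling $\xi(|f_\sigma|^{1+p})+\xi'(|f_\sigma|^{1+p})$. From \eqref{eq:bound-h}, $|h(y_1,\dots,y_m)|\leq B_{2(p+1)}(h)\sum_{j=1}^m V^{1/(2(p+1))}(y_j)$. Multiplying the two copies of $h$ appearing in $f_\sigma$, raising to the power $p+1$, and applying twice the elementary inequality $(a_1+\dots+a_m)^{p+1}\leq m^p\sum_j a_j^{p+1}$, then using $V^{1/2}(a)V^{1/2}(b)\leq \tfrac12(V(a)+V(b))$ on the resulting double sum, one obtains
\[
|f_\sigma(y_1,\dots,y_{2m})|^{1+p} \leq \tfrac{1}{2}\, m^{2p+1}\, B_{2(p+1)}(h)^{2(p+1)} \sum_{l=1}^{2m} V(y_l).
\]
The crucial observation to close the estimate is that, under both $\PP_\mu^{\mathcal{I}}$ and $\tilde{\PP}_\mu^{\mathcal{I}}$, each marginal of $y_l$ is either of the form $\mu P^{i_l}$ or $\pi$, and both have $V$-mass bounded by $M(\mu,V)$ by \eqref{eq:definition-M} together with $\pi(V)\leq M(\mu,V)$. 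Hence $\xi(|f_\sigma|^{1+p})+\xi'(|f_\sigma|^{1+p})\leq 2\, m^{2p+2}\, B_{2(p+1)}(h)^{2(p+1)}\, M(\mu,V)$.

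Substituting into Lemma~\ref{lem:bound-Lp-norm}, invoking Proposition~\ref{prop:bound-total-variation} to bound $\tvnorm{\xi-\xi'}^{p/(p+1)}$ by $(4M(\mu,V)\rho(j_\star(\mathcal{I})))^{p/(p+1)}$, and collecting the powers of $2$ via the identity $2^{1/(p+1)}\cdot 4^{p/(p+1)}=2^{(2p+1)/(p+1)}$, one recovers exactly $m^2\, D(p,\mu,V,h)^2\, \rho(j_\star(\mathcal{I}))^{p/(p+1)}$, matching the definition \eqref{eq:definition-Dp}. The main obstacle is purely bookkeeping: I have to track the exponents of $m$ through the two Jensen steps and verify that the constants in $C(p)$ from Lemma~\ref{lem:bound-Lp-norm} recombine precisely with the remaining prefactors to form $D(p,\mu,V,h)^2$. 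Once the factorization of $|f_\sigma|^{1+p}$ above is in place, the rest is algebraic simplification.
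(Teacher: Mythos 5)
Your proposal is correct and follows essentially the same route as the paper: the identity $\tilde{\PP}^{\mathcal{I}}_\mu(f_\sigma)=0$ converts the expectation into a difference of integrals against two close measures, the first bound then follows from total-variation duality and Proposition~\ref{prop:bound-total-variation}, and the second from Lemma~\ref{lem:bound-Lp-norm} together with the pointwise bound $|f_\sigma|^{p+1}\leq \tfrac12 m^{2p+1}B_{2(p+1)}(h)^{2(p+1)}\sum_{l=1}^{2m}V(y_l)$, which is exactly the estimate the paper obtains via $ab\leq\tfrac12(a^2+b^2)$ and Jensen. Your constant bookkeeping ($2^{1/(p+1)}\cdot 4^{p/(p+1)}=2^{(2p+1)/(p+1)}$ and the exponent $m^{2(p+1)}$ after integrating the marginals against $M(\mu,V)$) checks out and recovers $m^2 D(p,\mu,V,h)^2$ as claimed.
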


\begin{proof}
  The proof of \eqref{eq:bound-second-moment-canonical-U-stat-1} follows
  immediately from \eqref{eq:expectation-pi-canonical} and
  Proposition~\ref{prop:bound-total-variation}.

By applying the inequality $ab \leq 1/2(a^2+b^2)$ and the Jensen inequality, it follows from \textbf{A\ref{hyp:bound-V-norm-canonical-U-stat}}  that
\[
\left| f_\sigma(y_1,\dots,y_{2m}) \right|^{p+1} \leq (1/2) \, B^{2(p+1)}_{2(p+1)}(h) \, m^{2p+1} \,  \sum_{i=1}^{2m} V\left(y_{i}\right) \eqsp,
\]
where $f_\sigma$ is defined in \eqref{eq:definition-f-sigma}.  Therefore, for
any ordered $2m$-uplet $\mathcal{I}=(1 \leq i_1 \leq \dots \leq i_{2m} \leq
n)$,
\begin{align}
\label{eq:bound-Lp-norm-U-stat}
&\PP^{\mathcal{I}}_\mu\left[ \left| f_\sigma \right|^{p+1} \right] \leq   M(\mu,V)  B^{2(p+1)}_{2(p+1)}(h) \, m^{2(p+1)} \eqsp, \\
&\tilde{\PP}^{\mathcal{I}}_\mu\left[ \left| f_\sigma \right|^{p+1} \right]   \leq   M(\mu,V)  B^{2(p+1)}_{2(p+1)}(h) \, m^{2(p+1)} \eqsp.
\end{align}
The proof then follows by using \eqref{eq:expectation-pi-canonical} and by applying Proposition~\ref{prop:bound-total-variation} and Lemma~\ref{lem:bound-Lp-norm}.
\end{proof}

\begin{proof}[Proof of Theorem~\ref{theo:variance-U-stat} and Corollary~\ref{coro:variance-U-stat}]
Denote by  $\Gamma(2m)$ the collection of all permutations of $2m$ elements. We have
\begin{multline*}
  \PE_\mu\left[\left(\sum_{1\leq i_1 < \cdots < i_m\leq n} h(Y_{i_1}, \dots,
      Y_{i_m})\right)^2\right] \leq \\ \sum_{\sigma \in \Gamma(2m)}\sum_{1\leq
    i_1\leq\dots \leq i_{2m}\leq n} \left|\PE_\mu \left(h(Y_{i_{\sigma(1)}},
      \dots, Y_{i_{\sigma(m)}})h(Y_{i_{\sigma(m+1)}}, \dots,
      Y_{i_{\sigma(2m)}})\right)\right| \eqsp.
\end{multline*}
Let $k \geq 0$. Denote by $\mathsf{I}_{m,n}^k$ the set of all ordered
$2m$-uplet $\mathcal{I} = (1 \leq i_1 \leq \dots \leq i_{2m} \leq n)$ such that
$j_\star(\mathcal{I})=k$, where $j_\star(\mathcal{I})$ is defined in
\eqref{eq:definition-j-star}.  By definition, for $\mathcal{I} \in
\mathsf{I}_{m,n}^{k}$, and $\ell \in \{1,\dots,m\}$, $ j_\ell(\mathcal{I}) \leq
k$.  It is easily seen that the cardinal of $\mathsf{I}^{k}_{m,n}$ is at most
$2^m n^m (k+1)^m$.  The proof of Theorem~\ref{theo:variance-U-stat} follows
from Proposition~\ref{prop:bound-second-moment-canonical-U-stat},
\eqref{eq:bound-second-moment-canonical-U-stat-1}.

The proof of Corollary~\ref{coro:variance-U-stat} follows from
Proposition~\ref{prop:bound-second-moment-canonical-U-stat},
\eqref{eq:bound-second-moment-canonical-U-stat-2}.
\end{proof}
\section{Proof of Theorem~\ref{theo:LGN-Ustat}}
We will use the following elementary Lemma.
\begin{lemma}
\label{lem:easy-lemma}
Let $\sequencen{s_n}$ be a non-decreasing sequence of real numbers.
Let $\sequencen{u_n}$ be a non-decreasing sequence of positive numbers. Assume that
\begin{itemize}
\item the sequence $\sequencen{\ln(u_n)/\ln(n)}$ converges to a positive limit $\delta$.
\item for any $\alpha > 1$, the sequence $\sequencen{u^{-1}_{\lfloor \alpha^n \rfloor} s_{\lfloor \alpha^n \rfloor}}$ converges to $L$.
\end{itemize}
Then, the sequence $\sequencen{u_n^{-1} s_n}$ converges to $L$.
\end{lemma}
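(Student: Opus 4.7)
The plan is to execute a classical geometric-subsequence sandwich. I would first do two preparatory reductions. The hypothesis $\ln u_n/\ln n\to\delta>0$ together with monotonicity forces $u_n\to+\infty$, and a direct log computation (using $\ln\lfloor\alpha^j\rfloor=j\ln\alpha+o(1)$) yields $u_{\lfloor\alpha^{k+1}\rfloor}/u_{\lfloor\alpha^k\rfloor}\to\alpha^\delta$ as $k\to\infty$, for each fixed $\alpha>1$. I would then reduce to the case $s_n>0$: since $\{s_n\}$ is non-decreasing it has a limit $s_\infty\in\ocint{-\infty,+\infty}$, and if $s_\infty<+\infty$, then $s_n/u_n\to 0$ (using $u_n\to\infty$), so the hypothesis along $n=\lfloor\alpha^k\rfloor$ forces $L=0$ and the conclusion is immediate. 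In what follows I would therefore assume $s_\infty=+\infty$, so that $s_n>0$ for $n$ large enough and, in particular, $L\geq 0$.

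The core step is the sandwich itself. Fixing $\alpha>1$ and picking, for each large $n$, the unique integer $k=k(n)\to+\infty$ with $\lfloor\alpha^k\rfloor\leq n\leq\lfloor\alpha^{k+1}\rfloor$, the monotonicity of both sequences together with positivity yields
$$
\frac{s_{\lfloor\alpha^k\rfloor}}{u_{\lfloor\alpha^k\rfloor}}\cdot
    \frac{u_{\lfloor\alpha^k\rfloor}}{u_{\lfloor\alpha^{k+1}\rfloor}}
\;\leq\; \frac{s_n}{u_n} \;\leq\;
\frac{s_{\lfloor\alpha^{k+1}\rfloor}}{u_{\lfloor\alpha^{k+1}\rfloor}}\cdot
    \frac{u_{\lfloor\alpha^{k+1}\rfloor}}{u_{\lfloor\alpha^k\rfloor}}.
$$
Taking $\liminf$ and $\limsup$ in $n$ (equivalently in $k$), and combining the subsequential convergence $s_{\lfloor\alpha^k\rfloor}/u_{\lfloor\alpha^k\rfloor}\to L$ with the ratio limit from the preparatory step, I would obtain
$$
L\,\alpha^{-\delta}\;\leq\;\liminf_{n\to\infty}\frac{s_n}{u_n}\;\leq\;\limsup_{n\to\infty}\frac{s_n}{u_n}\;\leq\;L\,\alpha^{\delta}.
$$
Letting $\alpha\downarrow 1$ then pinches both bounds to $L$ and concludes.

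The only real subtlety I expect is the sign of $s_n$ in the sandwich: the inequalities as written require $s_n>0$, which is precisely why the reduction to $s_\infty=+\infty$ is done first. Once that is handled, the argument is essentially two applications of monotonicity plus one use of $\alpha^{\pm\delta}\to 1$; the polynomial-type growth of $u_n$ encoded by $\ln u_n/\ln n\to\delta>0$ is exactly what keeps the auxiliary $u$-ratios close to $1$ as $\alpha\downarrow 1$, and relaxing either monotonicity or this growth condition would require a genuinely different approach.
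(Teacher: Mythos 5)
Your argument is the same geometric-subsequence sandwich as the paper's: bracket $n$ between $\lfloor\alpha^{k}\rfloor$ and $\lfloor\alpha^{k+1}\rfloor$, use monotonicity of $s$ and $u$, invoke the ratio limit $u_{\lfloor\alpha^{k+1}\rfloor}/u_{\lfloor\alpha^{k}\rfloor}\to\alpha^{\delta}$, and let $\alpha\downarrow 1$. You are actually more careful than the paper on the sign issue: the sandwich inequalities as written do require the relevant $s$-terms to be nonnegative (with $s_n\equiv -1$ and $u_n=n$ the claimed upper bound is $-1/\lfloor\alpha^{k}\rfloor$, which is $\leq -1/n=s_n/u_n$, i.e.\ the inequality points the wrong way), and your preliminary reduction to $s_n\to+\infty$ repairs this, whereas the paper passes over it in silence.

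The step I would push back on is your claim that a ``direct log computation'' yields $u_{\lfloor\alpha^{k+1}\rfloor}/u_{\lfloor\alpha^{k}\rfloor}\to\alpha^{\delta}$. The hypothesis $\ln u_n/\ln n\to\delta$ only gives $\ln u_{\lfloor\alpha^{k}\rfloor}=\delta k\ln\alpha+o(k)$, so the difference of two consecutive terms is $\delta\ln\alpha+o(k)$, not $\delta\ln\alpha+o(1)$, and monotonicity of $u$ does not close this gap. Concretely, $u_n=4^{\lfloor\log_4 n\rfloor}$ is non-decreasing and positive with $\ln u_n/\ln n\to 1$, yet for $\alpha=2$ the ratio $u_{2^{k+1}}/u_{2^{k}}=4^{\lfloor(k+1)/2\rfloor-\lfloor k/2\rfloor}$ oscillates between $1$ and $4$ while $\alpha^{\delta}=2$; so the computation you describe would fail. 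To be fair, the paper asserts exactly the same limit without justification, so this is a defect you have inherited rather than introduced; it is harmless in the only use made of the lemma, where $u_n=\binom{n}{m}\sim n^{m}/m!$ and the ratio limit is immediate, and the cleanest fix is to replace the hypothesis $\ln u_n/\ln n\to\delta$ by the property actually used, namely $\lim_k u_{\lfloor\alpha^{k+1}\rfloor}/u_{\lfloor\alpha^{k}\rfloor}=\alpha^{\delta}$ for every $\alpha>1$.
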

\begin{proof}
Let $\alpha > 1$. For any $n \in \Nset$, denote by $k_n \eqdef \sup \{ k \in \Nset, \lfloor \alpha^k \rfloor \leq n \}$.
Since the sequences $\sequencen{s_n}$  and $\sequencen{u_n}$ are non decreasing and $u_n > 0$ for any $n \in \Nset$,
\[
\frac{u_{\lfloor \alpha^{k_n} \rfloor}}{u_{\lfloor \alpha^{k_n+1} \rfloor}} \frac{s_{\lfloor \alpha^{k_n} \rfloor}}{u_{\lfloor \alpha^{k_n} \rfloor}} \leq \frac{s_n}{u_n} \leq \frac{u_{\lfloor \alpha^{k_n+1} \rfloor}}{u_{\lfloor \alpha^{k_n} \rfloor}} \frac{s_{\lfloor \alpha^{k_n+1} \rfloor}}{u_{\lfloor \alpha^{k_n+1} \rfloor}} \eqsp.
\]
Since $\lim_{n \to \infty} u_{\lfloor \alpha^{n+1} \rfloor}/ u_{\lfloor \alpha^{n} \rfloor}= \alpha^\delta$,
\[
\frac{1}{\alpha^\delta} L \leq \liminf_{n} \frac{s_n}{u_n} \leq \limsup_n \frac{s_n}{u_n} \leq \alpha^\delta L
\]
\end{proof}

\begin{proof}[Proof of Theorem~\ref{theo:LGN-Ustat}]
  Note that the positive and negative parts of $h$ satisfy the conditions of
  Theorem~\ref{theo:LGN-Ustat} so that we can assume without loss of generality
  that $h$ is non negative.

\begin{proof}[Proof of (\ref{eq:LGN-Limite2})]
  For any $\tau > 0$, denote
  \[ h_\tau(y_1,\dots,y_m) \eqdef h(y_1,\dots,y_m) \1_{\{|h(y_1,\dots,y_m)| \leq \tau\}} \eqsp.
  \]
   By A\ref{hyp:assumption-kernel}, we
  have, for any $1 \leq i_1 < \dots < i_m \leq n$,
  \[
  \left| \PE_{\mu}\left[ h_\tau(Y_{i_1},\dots, Y_{i_m}) \right] - \pi^{\otimes
      m}[h_\tau] \right| \leq 2 M(\mu,V) \supnorm{h_\tau} \sum_{j=1}^m
  \rho(i_j-i_{j-1}) \eqsp,
  \]
  where by convention, $i_0 =0$.  Note that $\sum_{1 \leq i_1 < i_2 \leq n}
  \rho(i_2-i_1) = \sum_{k=1}^{n-1} (n-k) \rho(k) \leq n \sum_{k=1}^{n-1}
  \rho(k)$.  Therefore,
\begin{align*}
& \left| \binom{n}{m}^{-1} \sum_{1 \leq i_1 < \dots < i_m \leq n}    \PE_{\mu}\left[ h_\tau(Y_{i_1},\dots, Y_{i_m}) \right] - \pi^{\otimes m}[h_\tau] \right|  \\
&  \leq 2 M(\mu,V) \tau \sum_{j=1}^m \binom{n}{m}^{-1} \sum_{1 \leq i_1 < \dots < i_m \leq n} \rho(i_j-i_{j-1}) \\
&  \leq 2 M(\mu,V) \tau \sum_{j=1}^m \binom{n}{m}^{-1} n^{m-2} \sum_{1 \leq i_{j-1} < i_j \leq n} \rho(i_j - i_{j-1}) \\
&  \leq 2 M(\mu,V) \tau \sum_{j=1}^m \binom{n}{m}^{-1} n^{m}  n^{-1} \sum_{k=1}^n \rho(k)  \eqsp,
\end{align*}
which goes to zero since $n^{-1} \sum_{k=1}^n \rho(k) \to 0$. Under the stated
assumptions, there exists a constant $C$ such that
\[
\PE_\mu\left[ |h(Y_{\chunk{i}{1}{m}})| \1_{ \{ |h(Y_{\chunk{i}{1}{m}})| \geq
    \tau \}}\right] \leq C \left( \log^+ \tau \right)^{-(1+\delta)} \eqsp.
\]
Since $\lim_{\tau \to \infty} \pi^{\otimes m}[h_\tau]= \pi^{\otimes m}[h]$, the proof follows.
\end{proof}
\begin{proof}[Proof of (\ref{eq:LGN-Limite1})] Let $m \geq 1$ be fixed. We prove
  that
  \begin{equation}
    \label{eq:LGN-Limite1-newcenter}
  \lim_n   \binom{n}{m}^{-1} \sum_{1\leq i_{1}<\cdots<i_{m}\leq n} h(Y_{\chunk{i}{1}{m}})
= \pi^{\otimes m} [h] \eqsp, \quad\ \PP_\mu-\as\ \,
  \end{equation}

  Using Lemma~\ref{lem:easy-lemma}, we have to prove that \eqref{eq:LGN-Limite1-newcenter} holds if for any
  $\alpha>1$,
  \begin{equation}
    \label{eq:LGN-Limite1-subsuite}
    \lim_{k \to +\infty}  \binom{\phi_k}{m}^{-1}   \sum_{1\leq i_{1}<\cdots<i_{m}\leq \phi_k} h(Y_{\chunk{i}{1}{m}})
 = \pi^{\otimes m}[ h]  \quad\ \PP_\mu-\as\ \,
  \end{equation}
where  $\phi_k \eqdef \lfloor \alpha^{k} \rfloor$. By the Hoeffding decomposition
\eqref{eq:hoeffding-decomposition}, it suffices to prove that for any $c \in
\{1, \cdots, m \}$,
\[
\lim_{k \to +\infty} \binom{\phi_k}{c}^{-1} \ \sum_{1\leq i_{1}<\dots<i_{c} \leq \phi_k}
\pi_{c,m} h(Y_{\chunk{i}{1}{c}}) \aslim 0
\]
where $\pi_{c,m} h$ is the symmetric $\pi$-canonical function defined in \eqref{eq:definition-pi-k-m};
note that under (\ref{eq:Moment:LGN:Ustat}),
  \begin{equation}
    \label{eq:moment-condition:gc}
    \sup_{(y_1, \cdots, y_c) \in \Yset^c} \frac{|\pi_{c,m}h(y_1,\cdots,y_c)| \
  \log^+(|\pi_{c,m}h(y_1,\cdots, y_c)|)^{1+\delta}}{\sum_{i=1}^c V(y_i)} < +\infty
\eqsp.
  \end{equation}
  The case $c=1$ is the ergodic theorem for Markov Chain (see for example
  \cite[Theorem 17.1.7]{meyn:tweedie:2009}).

We consider now the case $c \in \{2, \cdots, m\}$. 
In all what follows, the index $c \in \{2,\dots,m\}$ is given and for ease of notations, we denote by
$g$ an arbitrary $\pi$-canonical symmetric function of $c$ variables . Take $s>0$ such that
\begin{equation}
\label{eq:condition-tau-r}
2s <r-1 \,.
\end{equation}
By A\ref{hyp:assumption-kernel} and (\ref{eq:moment-condition:gc}), there
exists a constant $C$ depending upon $s$ and $M(\mu,V)$, such that
\begin{multline}
\label{eq:borel-cantelli}
\PE_\mu\left[ \sum_{k=1}^{\infty} \ \phi_k^{-c} \sum_{1\leq i_{1}<\dots < i_{c} \leq \phi_k} |g(Y_{\chunk{i}{1}{c}})|\1_{\{|g(Y_{\chunk{i}{1}{c}})|\geq \phi_k^{s}\}} \right] \\
\leq C \ \sum_{k=1}^{\infty}( \log \phi_k)^{-\delta-1} \eqsp,
\end{multline}
and the RHS is finite since $\alpha>1$ and $\delta >0$. Therefore,
\begin{equation}
\label{eq:almost-sure-truncated}
\phi_k^{-c} \sum_{1\leq i_{1}<\dots<  i_{c} \leq \phi_k} g(Y_{\chunk{i}{1}{c}})\1_{\{|g(Y_{\chunk{i}{1}{c}})|\geq \phi_k^{s} \}
}\rightarrow 0 \quad \PP_\mu-\as\ \, .
\end{equation}
We must now prove that
\begin{equation}
\label{eq:LGN-borne}
\lim_k \phi_k^{-c} \sum_{1\leq i_{1}<\dots< i_{c} \leq \phi_k} g_{\phi_k^s}(Y_{\chunk{i}{1}{c}}) = 0 \,, \quad  \PP_\mu-\as\ ,
\end{equation}
where for $\tau > 0$, $g_{\tau}(\chunk{y}{1}{c}) \eqdef g(\chunk{y}{1}{c})
\1_{\{ |g(\chunk{y}{1}{c})|<\tau\}}$.  We apply again the Hoeffding
decomposition~(\ref{eq:hoeffding-decomposition}) to the function $g_{\phi_k^s}$.
Observe that since $g$ is $\pi$-canonical,
satisfies~(\ref{eq:moment-condition:gc}) and $\pi(V) < +\infty$, the dominated
convergence theorem implies that $\lim_k \pi^{\otimes c}(g_{\phi_k^s}) =
\pi^{\otimes c}(g) =0$.  Hence, by (\ref{eq:hoeffding-decomposition}), the
limit (\ref{eq:almost-sure-truncated}) holds provided for any $\ell \in \{1,
\cdots,c \}$,
\begin{equation}
  \label{eq:sublimit-truncated-upper}
  \lim_{k \to \infty} \phi_k^{-\ell} \sum_{1 \leq i_1 <
  \cdots < i_\ell \leq \phi_k} \pi_{\ell,c}[g_{\phi_k^s}](Y_{\chunk{i}{1}{\ell}}) = 0 \,, \quad
\PP_\mu-\as\ .
\end{equation}
Since $g$ is $\pi$ canonical, for $\ell \in \{1, \cdots, c-1 \}$,  we have $\pi_{\ell,c} g=0$ which implies
\[
\pi_{\ell,c} [g_{\phi_k^s}] = \pi_{\ell,c}\left[g -g \1_{\{|g| \geq \phi_k^s\}}\right] = - \pi_{\ell,c}\left[g \1_{\{|g| \geq \phi_k^s\}}\right] \eqsp.
\]
Therefore, (\ref{eq:sublimit-truncated-upper}) is equivalent to
\[
\lim_{k \to \infty} \phi_k^{-\ell} \sum_{1 \leq i_1 <
  \cdots < i_\ell \leq \phi_k} \pi_{\ell,c}[g \1_{\{|g| \geq \phi_k^s\}}] = 0
\,, \quad \PP_\mu-\as\ \eqsp,
\]
which holds true by using an argument similar to \eqref{eq:borel-cantelli};
details are omitted.

When $\ell = c$, by definition of $\pi_{c,c}$ (see
(\ref{eq:definition-pi-m-m})) we have by applying
Theorem~\ref{theo:variance-U-stat}
\begin{multline*}
\PE_\mu\left[ \left(\phi_k^{-c} \sum_{1\leq i_{1}<\dots<i_{c} \leq \phi_k}
      \; \pi_{c,c}[g_{\phi_k^s}] (Y_{\chunk{i}{1}{c}}) \right)^{2}\right] \\ \leq C \ \phi_k^{-c} \left( \sum_{j=0}^{\phi_k} (j+1)^c \rho(j)
  \right) \phi_k^{2s} \leq C' \ \phi_k^{1-r+2s},
\end{multline*}
which by \eqref{eq:condition-tau-r} implies (\ref{eq:sublimit-truncated-upper})
when $\ell =c$. This concludes the proof.
\end{proof}
\end{proof}


\begin{thebibliography}{}

\bibitem[\protect\citeauthoryear{Arcones}{Arcones}{1998}]{arcones:1998}
Arcones, M.~A. (1998).
\newblock The law of large numbers for {$U$}-statistics under absolute
  regularity.
\newblock {\em Electron. Comm. Probab.\/}~{\em 3}, 13--19 (electronic).

\bibitem[\protect\citeauthoryear{Baxendale}{Baxendale}{2005}]{baxendale:2005}
Baxendale, P.~H. (2005).
\newblock Renewal theory and computable convergence rates for geometrically
  ergodic {M}arkov chains.
\newblock {\em Ann. Appl. Probab.\/}~{\em 15\/}(1A), 700--738.

\bibitem[\protect\citeauthoryear{Borovkova, Burton, and Dehling}{Borovkova
  et~al.}{2001}]{borovkova:burton:dehling:2001}
Borovkova, S., R.~Burton, and H.~Dehling (2001).
\newblock Limit theorems for functionals of mixing processes with applications
  to {$U$}-statistics and dimension estimation.
\newblock {\em Trans. Amer. Math. Soc.\/}~{\em 353\/}(11), 4261--4318.

\bibitem[\protect\citeauthoryear{Dehling}{Dehling}{2006}]{dehling:2006}
Dehling, H. (2006).
\newblock Limit theorems for dependent {$U$}-statistics.
\newblock In {\em Dependence in probability and statistics}, Volume 187 of {\em
  Lecture Notes in Statist.}, pp.\  65--86. New York: Springer.

\bibitem[\protect\citeauthoryear{Douc, Moulines, and Rosenthal}{Douc
  et~al.}{2004}]{douc:moulines:rosenthal:2004}
Douc, R., E.~Moulines, and J.~Rosenthal (2004).
\newblock Quantitative bounds for geometric convergence rates of {M}arkov
  chains.
\newblock {\em Ann. Appl. Probab.\/}~{\em 14\/}(4), 1643--1665.

\bibitem[\protect\citeauthoryear{Elharfaoui and Harel}{Elharfaoui and
  Harel}{2008}]{harel:elharfaoui:2008}
Elharfaoui, E. and M.~Harel (2008).
\newblock Central limit theorem of the smoothed empirical distribution
  functions for asymptotically stationary absolutely regular stochastic
  processes.
\newblock {\em J. Appl. Math. Stoch. Anal.\/}, Art. ID 735436, 18.

\bibitem[\protect\citeauthoryear{Harel and Puri}{Harel and
  Puri}{1990}]{harel:puri:1990}
Harel, M. and M.~L. Puri (1990).
\newblock Weak invariance of generalized {$U$}-statistics for nonstationary
  absolutely regular processes.
\newblock {\em Stochastic Process. Appl.\/}~{\em 34\/}(2), 341--360.

\bibitem[\protect\citeauthoryear{Koroljuk and Borovskich}{Koroljuk and
  Borovskich}{1994}]{koroljuk:borovskich:1994}
Koroljuk, V.~S. and Y.~V. Borovskich (1994).
\newblock {\em Theory of {$U$}-statistics}, Volume 273 of {\em Mathematics and
  its Applications}.
\newblock Dordrecht: Kluwer Academic Publishers Group.
\newblock Translated from the 1989 Russian original by P. V. Malyshev and D. V.
  Malyshev and revised by the authors.

\bibitem[\protect\citeauthoryear{Lee}{Lee}{1990}]{lee:1990}
Lee, A.~J. (1990).
\newblock {\em {$U$}-statistics}, Volume 110 of {\em Statistics: Textbooks and
  Monographs}.
\newblock New York: Marcel Dekker Inc.
\newblock Theory and practice.

\bibitem[\protect\citeauthoryear{Meyn and Tweedie}{Meyn and
  Tweedie}{2009}]{meyn:tweedie:2009}
Meyn, S.~P. and R.~L. Tweedie (2009).
\newblock {\em Markov {C}hains and {S}tochastic {S}tability}.
\newblock London: Cambridge University Press.

\bibitem[\protect\citeauthoryear{Roberts and Rosenthal}{Roberts and
  Rosenthal}{2004}]{roberts:rosenthal:2004}
Roberts, G.~O. and J.~S. Rosenthal (2004).
\newblock General state space {M}arkov chains and {MCMC} algorithms.
\newblock {\em Probab. Surv.\/}~{\em 1}, 20--71.

\bibitem[\protect\citeauthoryear{Serfling}{Serfling}{1980}]{serfling:1980}
Serfling, R.~J. (1980).
\newblock {\em Approximation {T}heorems of {M}athematical {S}tatistics}.
\newblock New York: Wiley.

\end{thebibliography}
\end{document}